\newcommand{\ddiv}{\text{div}}
\newcommand{\DDIV}{\text{Div}}
\newcommand{\ZZ}{\mathbb{Z}}
\newcommand{\GG}{\mathbb{G}}
\newcommand{\RR}{\mathbb{R}}
\newcommand{\internal}{\mathcal{I}_\Gamma}
\newcommand{\self}{\calD_{\Gamma}}
\newcommand{\calR}{\mathcal{R}}
\newcommand{\calD}{\mathcal{D}}
\newcommand{\calE}{\mathcal{E}}
\DeclareMathOperator{\trop}{Trop}
\newtheorem{maintheorem}{Theorem}
\newtheorem{theorem}{Theorem}[section]
\newtheorem{proposition}[theorem]{Proposition}
\newtheorem{lemma}[theorem]{Lemma}
\newtheorem{corollary}[theorem]{Corollary}
\newtheorem{conjecture}[theorem]{Conjecture}
\newtheorem{definition}[theorem]{Definition}
\newtheorem{question}[theorem]{Question}
\theoremstyle{remark}
\newtheorem{example}[theorem]{Example}
\newtheorem{remark}[theorem]{Remark}
\newtheorem*{acknowledgements}{Acknowledgements}
\newcommand{\matt}[1]{\textcolor{green!60!black}{[[$\spadesuit\spadesuit\spadesuit$ #1]]}}
\title{Lifting tropical self intersections}
\author{{\larger Y}{\smaller oav} {\larger L}{\smaller en}}
\address{Yoav Len\\ Department of Mathematics\\ Georgia Institute of Technology\\ Atlanta, GA 30332\\ USA}
\email{yoav.len@math.gatech.edu}
\author {{\larger M}{\smaller atthew} {\larger S}{\smaller atriano}}
\address{Matthew Satriano\\ Department of Pure Mathematics\\ University of Waterloo\\ Waterloo, ON N2L 3G1\\ Canada}
\email{msatrian@uwaterloo.ca}
\begin{document}
 
\begin{abstract}
We study the tropicalization of  intersections of plane curves, under the assumption that they have the same tropicalization. We show that the  set of tropical divisors that arise in this manner is a pure dimensional balanced polyhedral complex and compute its dimension. When the genus is at most $1$, we show that all the tropical divisors that move in the expected dimension are realizable. 
As part of the proof, we introduce a combinatorial tool for  explicitly constructing large families of realizable tropical divisors.

\end{abstract}
\maketitle
\setcounter{tocdepth}{1}
\tableofcontents

\section{Introduction}
This paper is motivated by the following question: given plane curves $C$ and $C'$, which divisors supported on $\trop(C)\cap\trop(C')$ are tropicalizations of divisors from $C\cap C'$? When the two tropical curves intersect properly, every tropical intersection point is the tropicalization of an algebraic intersection point, counted with multiplicity \cite{OP}. However, as is often the case, tropical curves do not intersect properly, and their intersection may only be lifted at the level of cycles \cite{OR, He}. In this paper we focus on the other extreme: where the two curves have the exact same tropicalization.

Fix  a tropical curve $\Gamma$ in $\RR^2$, and an algebraic curve $C$ satisfying $\trop(C)=\Gamma$. A divisor $\calD$ on $\Gamma$ is said to be $C$-realizable (see Definition \ref{def:realizable}) if it is the tropicalization of the intersection of $C$ with another curve $C'$, such that $\trop(C')=\trop(C)=\Gamma$.
A divisor is said to be \emph{rational} if its coordinates are in the valuation group of the ground field. Clearly,  rationality is a necessary condition for realizability.

Our first main result is the following. 
\begin{maintheorem}\label{thm:polyhedral}
Let $\Gamma=\trop(C)$ be a smooth tropical plane curve.  If the valuation of the ground field surjects onto $\RR$, then the set of $C$-realizable divisors is a polyhedral complex of pure dimension $d-g$, where $d$ is the degree of the self intersection $\Gamma\cdot\Gamma$.
\end{maintheorem}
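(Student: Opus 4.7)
The plan is to parametrize the algebraic curves $C'$ with $\trop(C')=\Gamma$ and analyze the natural evaluation map sending $C'$ to $\trop(C\cap C')$. Write $C=V(f)$ with $f=\sum_{v\in\Delta\cap\ZZ^2}a_vx^v$, where $\Delta$ is the Newton polygon. Any $C'=V(g)$ with $\trop(C')=\Gamma$ arises from $g=\sum b_vx^v$ with $\val(b_v)=\val(a_v)$ for each $v$. Modulo scaling by $K^{\times}$, the moduli $\mathcal{M}$ of such $g$ has dimension $|\Delta\cap\ZZ^2|-1$, which by Pick's theorem and the fact that $g=|\operatorname{int}(\Delta)\cap\ZZ^2|$ for a smooth tropical plane curve equals $g+B-1$, where $B=|\partial\Delta\cap\ZZ^2|$. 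Define $\Phi\colon\mathcal{M}\to\operatorname{Sym}^d(\Gamma)$ by $\Phi(C')=\trop(C\cap C')$; the image of $\Phi$ is precisely the set of $C$-realizable divisors.

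For the dimension upper bound, the key observation is that $V(f)\cap V(g)=V(f)\cap V(g+\lambda f)$ as subschemes, and that $V(g+\lambda f)$ still tropicalizes to $\Gamma$ for a $1$-parameter family of $\lambda$. Hence every fiber of $\Phi$ contains a nontrivial pencil, giving $\dim\Phi(\mathcal{M})\le\dim\mathcal{M}-1=g+B-2$. Using the Pick identity $d=2\operatorname{area}(\Delta)=2g+B-2$, this upper bound is exactly $d-g$. The polyhedral structure on the image is induced by stratifying $\mathcal{M}$ by the combinatorial type of the intersection: on each stratum, $\Phi$ is given by piecewise-linear functions in the residues of the ratios $b_v/a_v$, so the image is naturally a polyhedral complex.

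For the matching lower bound, note that every realizable divisor lies in the tropical linear system $|\Gamma\cdot\Gamma|$, which by tropical Riemann--Roch has rank exactly $d-g$, since $d=2g+B-2>2g-2$. Purity then reduces to showing that on each top-dimensional stratum of $\mathcal{M}$, the linearization of $\Phi$ (after quotienting by the $f$-pencil and scaling) has full rank $d-g$. The hypothesis that $\val$ surjects onto $\RR$ enters here: it ensures that the residue parameters can be chosen to realize every combinatorial type of intersection.

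The main difficulty is the purity statement. The upper bound is essentially formal from the $f$-pencil, but ruling out strata of dimension strictly less than $d-g$ requires a stratum-by-stratum rank computation showing that independent perturbations of the $b_v$ yield independent moves of the intersection points along the edges of $\Gamma$. In spirit this is a tropical analogue of the fact that a smooth plane curve is scheme-theoretically cut out by its intersection with a general complementary curve, and I would expect the proof to invoke the combinatorial device for explicitly constructing realizable divisors advertised in the abstract.
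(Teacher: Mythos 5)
Your parameter count reproduces the right numerology ($d=2g+B-2$, so $|\Delta\cap\ZZ^2|-2=d-g$), but the proposal has genuine gaps at exactly the points where the theorem has content. For the upper bound you work directly with the semi-algebraic family $\mathcal{M}$ of polynomials with prescribed coefficient valuations and subtract one for the pencil $g\mapsto g+\lambda f$. But $\mathcal{M}$ is a non-Archimedean semi-algebraic set, not a variety, and $\Phi$ is not a morphism of varieties, so ``$\dim$ image $\le\dim$ source minus fiber dimension'' is not available: you are conflating the $K$-dimension of a coefficient family with the real dimension of a tropical image (note also that $g+\lambda f$ stays in $\mathcal{M}$ only for $\val(\lambda)>0$, so even the pencil is only semi-algebraic). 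This is precisely the difficulty the paper flags and then sidesteps: Lemma \ref{lem:anyCurve} (replace $g$ by $f+t^rg$) shows that intersecting $C$ with curves tropicalizing to $\Gamma$ yields exactly the same divisors as intersecting $C$ with arbitrary curves whose Newton polygon is contained in that of $C$. Hence $\calR_C$ is the tropicalization of an honest algebraic object, namely the locus $L$ inside the complete linear system of degree $d$ on $\overline{C}$ consisting of divisors supported on $C\subset\GG_m^2$; the dimension bound $\dim L\le d-g$ is then Riemann--Roch on $C$, and polyhedrality, purity, and balancedness all follow at once from the structure theorem for tropicalizations applied to the irreducible components of the pullback $\widetilde L\subset C^d\subset\GG_m^{2d}$.

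More seriously, your lower bound and purity are missing. Tropical Riemann--Roch gives the rank of $|\self|$, not the dimension of its realizable subset; tropical linear systems are not pure-dimensional (they contain cells of dimension up to $d$), and a priori the realizable locus could be a low-dimensional subset of $|\self|$, so ``full rank on each top-dimensional stratum'' is exactly what needs proof, and you defer it. In the paper the lower bound is Proposition \ref{prop:degreesOfFreedom}: an explicit construction, via stable intersections $\Gamma\cdot\Gamma'$ with tropical curves $\Gamma'$ sharing the same subdivided dual polygon (translate by a small generic $\eta$, then perturb edge lengths, with the analysis of pinned vertices in Lemmas \ref{l:most-verticies-pinned} and \ref{l:1-diml-ker-at-Gamma'}), of a $(d-g)$-dimensional family of realizable divisors; combined with Corollary \ref{cor:anyCurve} this forces $\dim L=d-g$. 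With that in hand, purity is not a stratum-by-stratum rank computation at all, but an automatic consequence of every component of $\widetilde L$ having dimension $d-g$ and of its tropicalization being a balanced complex of that pure dimension. As written, your proposal establishes neither the polyhedral structure, nor purity, nor that the dimension $d-g$ is actually attained.
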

\noindent We should clarify what we mean by `balanced'. Since our tropical curve is embedded in $\RR^2$, we may identify its divisors of degree $d$ with subsets of $(\RR^2)^d/S^{d}$. We therefore refer to a set as balanced if its pullback to $\RR^{2d}$ is. 

We next specialize to the case where the curve has genus is at most $1$. We prove that a large class of divisors satisfying a combinatorial criterion are necessarily $C$-realizable. 

\begin{definition}\label{def:internal}
Let $\Gamma$ be a tropical curve of genus $1$. 
A divisor in the linear system $|\Gamma\cdot\Gamma|$ is said to be \emph{internal} if has at least two chips (see Section \ref{sec:divisors}) on the cycle, or a single chip in the interior of an edge of the cycle. If $\Gamma$ has genus 0 then every divisor is said to be internal.
\end{definition}

For instance, the divisors described in Figures \ref{subfig:1a} and \ref{subfig:1b} are internal, whereas the divisor in Figure \ref{subfig:1c} is not. 

\begin{figure}[h]
\centering
\begin{tikzpicture}[scale=.5]


\begin{scope}[shift={(0,0)}, scale=1]
\draw[blue] (-1.5,-1) -- (1.5,-1) -- (1.5,1) -- (-1.5,1) -- (-1.5,-1);
\draw[blue] (-1.5,-1) -- (-2.5,-2);
\draw[blue] (1.5,1) -- (2.5,2);
\draw[blue] (1.5,-1) -- (2.5,-2);
\draw[blue] (-1.5,1) -- (-2.5,2);

\draw [fill=purple,radius=.1] (-1.2,1) circle;
\draw [fill=purple,radius=.1] (1.2,1) circle;
\draw [fill=purple,radius=.1] (-.7,-1) circle;
\draw [fill=purple,radius=.1] (.7,-1) circle;

\node at (0,-2.5) {\parbox{0.3\linewidth}{\subcaption{}\label{subfig:1a}}};
\end{scope}

\begin{scope}[shift={(7,0)}, scale=1]
\draw[blue] (-1.5,-1) -- (1.5,-1) -- (1.5,1) -- (-1.5,1) -- (-1.5,-1);
\draw[blue] (-1.5,-1) -- (-2.5,-2);
\draw[blue] (1.5,1) -- (2.5,2);
\draw[blue] (1.5,-1) -- (2.5,-2);
\draw[blue] (-1.5,1) -- (-2.5,2);

\draw [fill=purple,radius=.1] (-1.7,-1.2) circle;
\draw [fill=purple,radius=.1] (.5,1) circle;
\draw [fill=purple,radius=.1] (1.7,1.2) circle;
\draw [fill=purple,radius=.1] (2.1,1.6) circle;

\node at (0,-2.5) {\parbox{0.3\linewidth}{\subcaption{}\label{subfig:1b}}};
\end{scope}

\begin{scope}[shift={(14,0)}, scale=1]
\draw[blue] (-1.5,-1) -- (1.5,-1) -- (1.5,1) -- (-1.5,1) -- (-1.5,-1);
\draw[blue] (-1.5,-1) -- (-2.5,-2);
\draw[blue] (1.5,1) -- (2.5,2);
\draw[blue] (1.5,-1) -- (2.5,-2);
\draw[blue] (-1.5,1) -- (-2.5,2);

\draw [fill=purple,radius=.1] (1.5,1) circle;
\draw [fill=purple,radius=.1] (1.8,-1.3) circle;
\draw [fill=purple,radius=.1] (-1.7,-1.2) circle;
\draw [fill=purple,radius=.1] (-2,1.5) circle;

\node at (0,-2.5) {\parbox{0.3\linewidth}{\subcaption{}\label{subfig:1c}}};
\end{scope}

\end{tikzpicture}
\caption{Three  divisors in $|\self|$.}
\label{ex:internal}
\end{figure}


Our next main result is then:

\begin{maintheorem}\label{thm:main}
Suppose that  $\Gamma=\trop(C)$ is a smooth tropical plane curve of genus $g\leq 1$. Then every rational internal divisor is $C$-realizable.  In particular, every divisor in $|\Gamma\cdot\Gamma|$ that is supported on the minimal skeleton is realizable. 
\end{maintheorem}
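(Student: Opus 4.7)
My plan is to prove realizability by explicitly constructing, for each rational internal divisor $\calD$, a curve $C'$ with $\trop(C')=\Gamma$ satisfying $\trop(C\cap C')=\calD$. The engine will be the combinatorial tool foreshadowed in the abstract: given an assignment of each chip of $\calD$ to an edge of $\Gamma$ containing it, one translates the realizability condition into an initial-form system on the coefficients of $C'$ whose solvability depends only on the combinatorial type of the assignment. Realizability is then established by producing, for every internal $\calD$, an assignment whose associated system admits a solution with the prescribed valuations.

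In the genus $0$ case, $|\Gamma\cdot\Gamma|$ has dimension $d$, which already matches the dimension of the realizable locus from Theorem~A, so it suffices to show that the realizable locus hits every rational point of the linear system. I would proceed by induction on the number of chips, peeling off chips one at a time from outermost edges of the underlying graph of $\Gamma$. Because $\Gamma$ is acyclic, the relevant initial-form system becomes upper-triangular in the natural ordering and admits a direct solution. Since every divisor of the correct degree is internal in genus $0$, this finishes Theorem~B in that case.

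The genus $1$ case is the main obstacle. The realizable locus has dimension $d-1$, one less than that of $|\Gamma\cdot\Gamma|$, so realizability imposes exactly one nontrivial condition. I identify this condition as a tropical Abel--Jacobi condition valued in the Jacobian circle of $\Gamma$, and use the internal hypothesis to produce chips whose motion along the cycle cancels any prescribed Jacobian shift: two chips on the cycle may be slid in opposite directions around the loop, or a single chip in the interior of a cycle edge may be perturbed along that edge. Combining this with the genus-$0$ style construction for the off-cycle chips yields a $(d-1)$-parameter family realizing every internal divisor. The subtle point I expect to wrestle with is showing that this cancellation is exact in the tropical setting and that valuations on cycle and off-cycle chips can be prescribed independently, so that the construction actually fills out the $(d-1)$-dimensional fiber rather than a proper subvariety of it.

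Finally, the ``in particular'' statement follows at once: the self-intersection degree $d$ is at least $2$ for any smooth tropical plane curve of positive genus, so any divisor in $|\Gamma\cdot\Gamma|$ supported on the minimal skeleton of a genus-$1$ curve has at least two chips on the cycle and is therefore internal; in genus $0$ the minimal skeleton is a single vertex and the conclusion is immediate from the preceding argument.
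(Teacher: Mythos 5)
Your genus-$1$ argument has a genuine gap, and it sits exactly where the theorem's difficulty lies. You assert that, since the realizable locus has dimension $d-1$, realizability imposes ``exactly one nontrivial condition,'' which you identify with a tropical Abel--Jacobi condition and then satisfy by sliding chips along the cycle. But every divisor you are asked to realize already lies in $|\self|$, i.e.\ it is already linearly equivalent to the stable self-intersection, so the Abel--Jacobi condition is automatic and there is nothing to ``cancel.'' The real content of the theorem is that, inside $|\self|$, the realizable locus $\calR_C$ fills out the internal cells, and linear equivalence with $\self$ is demonstrably insufficient: Theorem \ref{thm:counter} exhibits divisors in $|\self|$ (one chip at a vertex of the cycle, hence non-internal) that are not realizable for \emph{any} $C$. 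Your proposed mechanism does not use internality in a way that would distinguish these cases -- the hypothesis only supplies chips to slide, while the existence of the lifting curve $C'$ is delegated to an ``initial-form system'' whose solvability is asserted, not proved. That solvability is precisely the hard lifting problem: the set of curves with a fixed tropicalization is only a non-Archimedean semialgebraic set with no good algebraic structure, and in genus $\geq 1$ the cycle imposes a global compatibility (superabundance-type) constraint on the initial forms that a peeling/upper-triangular argument does not see. So the key step -- ``producing an assignment whose associated system admits a solution with the prescribed valuations'' -- is essentially the statement being proved, and no argument for it is given. You also never address the rationality hypothesis: when the valuation of $K$ is not surjective, one cannot prescribe arbitrary real valuations for the coefficients of $C'$, and some descent argument is needed.

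For contrast, the paper proceeds indirectly rather than by explicit construction. Lemma \ref{lem:anyCurve} replaces ``curves with tropicalization $\Gamma$'' by a complete linear system without changing intersections; Theorem \ref{thm:polyhedral} then shows $\widetilde\calR_C$ is a balanced polyhedral complex of pure dimension $d-g$; Proposition \ref{prop:degreesOfFreedom} (stable intersections of $\Gamma$ with small translates, varying edge lengths) places one top-dimensional cell of $\internal$ inside $\calR_C$; and Lemmas \ref{lem:con1} and \ref{lem:neighbours} propagate containment across all of $\internal$ using connectivity in codimension one together with the balancing condition, with a separate analysis at exposed cells. Even in genus $0$, where your inductive peeling scheme is plausibly repairable, the paper's route is a short dimension count: Riemann--Roch gives $\dim L = d$, forcing $L=\mathrm{Eff}^d(C)$, and $\mathrm{Eff}^d(C)\to\mathrm{Eff}^d(\Gamma)$ is surjective. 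Finally, the rational/non-surjective-valuation case is handled by passing to an extension with surjective valuation and descending via the linear conditions of passing through the $K$-points of the divisor -- a step absent from your proposal. Your closing remark on the ``in particular'' statement (divisors on the minimal skeleton are internal since $d\geq 2$) is fine.
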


An inherent difficulty in the proof is that, \emph{a priori}, the set of curves with a fixed tropicalization is a non-Archimedean semi-algebraic set that does not have a nice algebraic structure \cite{NPS}: it is not Zariski closed, and is not closed under addition. However, using Lemma \ref{lem:anyCurve} below, we can replace it with a  complete linear system without changing the set of intersection points. This may be considered as a variation of tropical modification \cite{CM14, Mi06}.  
The realizability problem is therefore replaced with characterizing the tropicalization of certain linear systems. While the literature contains various advancements towards this goal in the case of abstract curves \cite{CJP, CLM, MUW}, this is in general a wide open problem. In the present paper, however, we gain additional mileage by explicitly using the polyhedral structure obtained from the embedding of $\Gamma$ into $\RR^2$.

We stress, however, that Theorem \ref{thm:main} also ensures the realizability of divisors supported on the minimal skeleton of $\Gamma$, and such divisors \emph{do not} depend on the embedding of $\Gamma$ into $\RR^2$.



In \cite{Mor15}, Morrison provides a strong necessary condition for lifting intersections of smooth tropical planes curves in terms of chip firing: if $\calD$ is the tropicalization of the scheme-theoretic $C\cap C'$, then $\calD$ is linearly equivalent to the stable intersection $\trop(C)\cdot\trop(C')$. By combining the tools developed in Sections \ref{sec:distinguished-loci} and \ref{sec:genus1-case}, we show in Theorem \ref{thm:counter} that the converse of this statement is false.

Finally, we mention that tropical geometry has already had many applications in computing intersections of algebraic varieties: it has been used to compute Gromov--Witten invariants \cite{Rau}, Hurwitz numbers \cite{CMR}, and bitangents of plane curves \cite{LM, IL}. We hope that the results in this paper, and the tools developed to achieve them will play a role in understanding non-transverse tropical intersection, and inspire additional applications of tropical intersection theory to algebraic geometry.

\begin{acknowledgements}
We thank Matt Baker, Dustin Cartwright, Eric Katz,  Sam Payne,  Martin Ulirsch, Joseph Rabinoff, and Josephine Yu for helpful discussions and valuable advice. The second author is partially supported by an NSERC Discovery grant. We also thank the referee for their insightful comments and remarks. 
\end{acknowledgements}


\section{Background}

We begin by setting notations and reminding the reader some of the basic terms in tropical geometry.
By a \emph{tropical curve} we mean a subset $\Gamma\subset\RR^2$ with the following properties. First there is a distinguished finite set of points $v_1,\dots,v_m\in\Gamma\cap\ZZ^2$ which we refer to as \emph{vertices}. For each $i<j$, let $e_{ij}$ be the line segment between $v_i$ and $v_j$. If $e_{ij}\subset\Gamma$, we refer to $e_{ij}$ as a \emph{bounded edge} of $\Gamma$. If a ray $r$ with endpoint at vertex $v_i$ is contained in $\Gamma$, we refer to $r$ as an \emph{unbounded edge} of $\Gamma$; we require that every  edge has rational slope. Then $\Gamma$ is the union of its vertices, bounded edges, and unbounded edges. We further require that at each vertex $v\in\Gamma$, the \emph{balancing condition} holds: $\sum_{v\in e}u_e=0$, where the sum runs through the edges $e$ containing $v$, and $v+u_e$ is the primitive lattice point along the ray emanating from $v$ in the direction of $e$. 

The \emph{dual of a vertex} $v$ is a polygon $P_v$ whose edges have lattice length $1$ and are perpendicular to the edges emanating from the vertex. Such a polygon exists by the balancing condition. The union $P_\Gamma:=\bigcup_{v\in\Gamma} P_v$ is again a polygon \cite[Section 2.2]{BIMS}. We refer to $P_\Gamma$ as the \emph{dual polygon} of $\Gamma$, and refer to the collection $\{P_v\}$ as the \emph{subdivided dual polygon} of $\Gamma$. 
A tropical curve is \emph{smooth} if it is trivalent, and each $P_v$ is a triangle of area $\frac{1}{2}$. Unless said otherwise, we will always assume that tropical curves are smooth. This is also the reason that we do not discuss weights on the edges of the curve.


Let $K$ be an algebraically closed field of characteristic 0 with a non-trivial non-Archimedean valuation $\nu:K\to\RR$. Unless stated otherwise, we assume that the valuation is surjective onto $\RR$. 
The \emph{tropicalization} of a variety $X$ embedded in an $n$-dimensional torus $\mathbb{G}_m^n$ is the set of points
\[
\trop(x_1,\ldots,x_n)=(-\nu(x_1),\ldots,-\nu(x_n)),
\]
as $(x_1,\ldots,x_n)$ runs through the closed points of $X$. Note that if we removed the assumption that the valuation surjected onto $\RR$, the tropicalization would be the closure of this map.
When $n=2$ and $X$ is a curve, Kapranov's theorem \cite[Theorem 3.13]{MS} implies that $\trop(X)$ is a tropical curve. Moreover, every tropical curve in $\RR^2$ arises this way. 

When two tropical curves $\Gamma$ and $\Gamma'$ intersect properly at a point $p$, their intersection multiplicity is $|\!\det(u,u')|$, where $u,u'$ are the primitive direction vectors  of the edges containing $p$. The \emph{stable intersection} of $\Gamma$ and $\Gamma'$, denoted $\Gamma\cdot\Gamma'$, is obtained by choosing a vector $v$ such that $\Gamma$ and $\epsilon v+\Gamma'$ intersect properly for small enough $\epsilon>0$, and taking the limit of $\Gamma\cap(\epsilon v + \Gamma')$ as $\epsilon$ tends to $0$ \cite[Section 3.6]{MS}.

\subsection{Tropical and algebraic divisors}\label{sec:divisors}
We provide a brief review of the basic concepts in the theory of tropical divisors. See \cite{BJ} for a more thorough treatment. 
A \emph{divisor} on $\Gamma$ is a formal sum 
\[
\calD = a_1 p_1 + \ldots +a_k p_k,
\]
where $p_1,p_2,\ldots,p_k$ are points of $\Gamma$ and $a_1,a_2,\ldots,a_k$ are integers. In this case, $\calD$ is said to have $a_i$ \emph{chips} at $p_i$; the \emph{support} of $\calD$ is the set of points $p_i$. The divisor $\calD$ is said to be \emph{effective} if all $a_i\geq0$. The \emph{degree} of $\calD$ is defined as $\deg(\calD)=\sum a_i$. The divisor group of $\Gamma$, denoted $\DDIV(\Gamma)$, is the free abelian group generated by the points of $\Gamma$. 

Suppose $\phi\colon\Gamma\to\RR$ is a continuous function whose restriction to each edge of $\Gamma$ is a piecewise linear function with integer slopes and finitely many domains of linearity. Assume moreover that $\phi$ is constant outside an arbitrary large disk.
We associate to $\phi$ a divisor $\ddiv(\phi)$ as follows. If $p$ is in the interior of a domain of linearity of $\phi$, then it is not in the support of $\ddiv(\phi)$. Otherwise, let $\{\ell_i\}$ be the domains of linearity with $p\in\ell_i$, and consider each $\ell_i$ as an edge oriented toward $p$; then the number of chips of $\ddiv(\phi)$ at $p$ is defined to be the sum of slopes of the $\phi|_{\ell_i}$. Divisors of the form $\ddiv(\phi)$ are referred to as \emph{principal}. We say that two divisors are \emph{linearly equivalent} if their difference is principal.

The \emph{linear system} of a divisor $\calD$, denoted $|\calD|$ is the set of effective divisors that are linearly equivalent to $\calD$. It is a polyhedral complex, which in general does not have pure dimension \cite{HMY}. We may view $|\calD|$ as a subset of $\Gamma^d/{S_d}$, where $S_d$ is the group of permutations on $d$ elements. In particular, it is a subset of $\RR^{2d}/{S^d}$. For any subset $\mathcal{S}$ of $\Gamma^d/S_d$, we denote $\widetilde{\mathcal{S}}$ its pullback to $\Gamma^d$. 


Fix a divisor $D$ of degree $d$ on a smooth curve $C$ embedded in the 2-dimensional torus $\mathbb{G}_m^2$. Since $C$ is not a proper curve, we need to be careful with regards to the definition of the linear system. 
Given a smooth compactification $\overline{C}$ of $C$, the linear system $|D|$ consists of the effective divisors that are are equivalent to $D$ on $\overline{C}$, and are supported on $C$.

%
%
%



%
%

\section{Realizing divisors via stable intersection}
\label{sec:distinguished-loci}
For the rest of this section, we fix a curve $C\subseteq \GG_m^2$ defined over a non-Archimedean valued field $(K,\nu)$ with smooth tropicalization $\Gamma\subseteq\RR^2$. We denote the genus of $\Gamma$ by $g=g(\Gamma)$, the stable self intersection $\Gamma\cdot\Gamma$ by $\self$, and the degree of $\self$ by $d$. 

\begin{definition}
\label{def:realizable}
A divisor $\calD$ in $|\self|$  is $C$-\emph{realizable} (or just realizable when there is no cause for confusion) if there is a curve $C'$ such that $\trop(C')=\Gamma$ and $\calD=\trop(C\cap C')$. 
\end{definition}


We denote by $\calR_C \subset |\self| \subset \Gamma^d/S_d$ the set of all $C$-realizable divisors. Let $\widetilde\calR_C\subset\Gamma^d$ be the pullback of $\calR_C$ under the quotient map $\Gamma^d\to\Gamma^d/S_d$. 
Our goal in this section is to show  that $\widetilde\calR_C$ is a balanced polyhedral complex of dimension $d-g$, i.e.~to prove Theorem \ref{thm:polyhedral}. We accomplish this in several steps. In Section \ref{subsec:d-g-dimensional-locus}, we study the auxiliary locus $\calR_\Gamma^{st}$ of divisors $\Gamma\cdot\Gamma'$, where $\Gamma'$ varies through all the tropical curves whose subdivided dual polygon coincides with the subdivided dual polygon of $\Gamma$. 
We prove in Proposition \ref{prop:degreesOfFreedom} that this yields a $(d-g)$-dimensional sub-locus of $\widetilde\calR_C$. This results in Theorem 
\ref{thm:polyhedral} as well as the genus $0$ case of Theorem \ref{thm:main}.

%
%
%
%
%
%
%

\subsection{Obtaining a $(d-g)$-dimensional locus of realizable divisors}
\label{subsec:d-g-dimensional-locus}

Although we are interested in studying  divisors of the form $\trop(C\cap C')$ where $\trop(C')=\Gamma$, the following lemma shows that we are allowed to consider a larger, better behaved set.

\begin{lemma}\label{lem:anyCurve}
Let $C_1$ and $C_2$ be distinct  curves in $\mathbb{G}_m^2$, such that the Newton polygon of $C_2$ is contained in the Newton polygon of $C_1$. Then there is a curve $C'_2$ with $\trop(C_1)=\trop(C'_2)$ such that the scheme-theoretic intersections $C_1\cap C_2$ and  $C_1\cap C'_2$ are equal.
\end{lemma}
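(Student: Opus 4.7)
The plan is to construct $C'_2$ by perturbing a defining polynomial of $C_2$ by a large scalar multiple of a defining polynomial of $C_1$. Fix Laurent polynomials $f_i = \sum_\alpha a^{(i)}_\alpha x^\alpha \in K[x^{\pm 1}, y^{\pm 1}]$ cutting out $C_i$, so the Newton polygon of $f_2$ is contained in $P_1 := \mathrm{Newton}(f_1)$. I would take
$$f'_2 := f_2 + c\, f_1, \qquad C'_2 := V(f'_2) \subseteq \GG_m^2,$$
for a scalar $c \in K^{*}$ whose valuation $\nu(c)$ will be chosen very negative. Since $C_1 \neq C_2$, $f'_2 \not\equiv 0$, so $C'_2$ is an honest curve.

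Equality of scheme-theoretic intersections is then immediate: $(f_1, f_2) = (f_1, f_2 + c f_1) = (f_1, f'_2)$ as ideals in the coordinate ring of $\GG_m^2$, so $C_1 \cap C_2 = C_1 \cap C'_2$.

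The substantive step is arranging $\trop(C'_2) = \trop(C_1)$. The coefficient of $x^\alpha$ in $f'_2$ is $a^{(2)}_\alpha + c\, a^{(1)}_\alpha$. For each of the finitely many lattice points $\alpha \in P_1$ where $a^{(1)}_\alpha$ and $a^{(2)}_\alpha$ are both nonzero, I would impose $\nu(c) < \nu(a^{(2)}_\alpha) - \nu(a^{(1)}_\alpha)$; since the valuation is non-trivial, finitely many such conditions can be met simultaneously. The non-Archimedean triangle inequality then gives $\nu(a^{(2)}_\alpha + c\, a^{(1)}_\alpha) = \nu(c) + \nu(a^{(1)}_\alpha)$ at every $\alpha$ with $a^{(1)}_\alpha \neq 0$, so the lift of coefficients used to build the regular subdivision for $f'_2$ agrees, at every such $\alpha$, with the lift for $f_1$ up to the global vertical shift $\nu(c)$. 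The remaining lattice points have $a^{(1)}_\alpha = 0$ and coefficient of $f'_2$ equal to the fixed scalar $a^{(2)}_\alpha$; their heights lie above the rest of the lower envelope once $\nu(c)$ is sufficiently negative, and hence contribute nothing to the subdivision. Consequently $f_1$ and $f'_2$ induce the same regular subdivision of $P_1$, and Kapranov's theorem yields $\trop(C'_2) = \trop(V(f_1)) = \trop(C_1)$.

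I expect no serious obstacle beyond this valuation bookkeeping: the geometric content is simply that adding any scalar multiple of $f_1$ to $f_2$ preserves the ideal and hence the intersection with $C_1$, while taking a dominating scalar multiple forces the tropicalization to match that of $f_1$.
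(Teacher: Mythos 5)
Your proof is correct and is essentially the paper's argument: the paper takes $h=f_1+t^r f_2$ with $r\gg 0$, which is your $f'_2=f_2+c\,f_1$ with $c=t^{-r}$ up to an irrelevant scalar, and both rely on the same ideal identity $(f_1,f_2)=(f_1,f'_2)$ plus the observation that the dominating $f_1$-coefficients force $\trop(C'_2)=\trop(C_1)$. Your valuation bookkeeping just spells out the "choosing $r$ sufficiently large" step that the paper leaves implicit.
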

\begin{proof}
Let $f_1,f_2\in K[x^\pm,y^\pm]$  such that $C_1=V(f_1)$ and $C_2=V(f_2)$. Let $C'_2=V(h)$ where $h=f_1 + t^r f_2$, $t$ is a uniformizer of $K$, and $r$ is a positive integer. Since the Newton polygon of $C_2$ is contained in that of $C_1$, choosing $r$ sufficiently large, we have $\trop(C_1)=\trop(C'_2)$. Since we have an equality of ideals $(f_1,f_2)=(f_1,h)$, the scheme-theoretic intersections $C_1\cap C_2$ and $C_1\cap C'_2$ are equal.
\end{proof}


In light of Lemma \ref{lem:anyCurve}, we make the following definitions, where ``st'' stands for ``stable''.

\begin{definition}
\label{def:same-subdivided-dual-polygon}
 $\calR_{\Gamma}^{st}$ is the set of divisors $\Gamma\cdot\Gamma'$, where $\Gamma'$ is any tropical curve whose dual polygon coincides with that of $\Gamma$. 
\end{definition}

Lemma \ref{lem:anyCurve} immediately implies:

\begin{corollary}
\label{cor:anyCurve}
We have a containment
\[
\calR_{\Gamma}^{st}\subseteq\calR_C.
\]
\end{corollary}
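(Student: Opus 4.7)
The strategy is to invoke Lemma \ref{lem:anyCurve} directly: given $\calD=\Gamma\cdot\Gamma'$ in $\calR_\Gamma^{st}$, my goal is to produce an algebraic curve $C''\subset\GG_m^2$ with $\trop(C'')=\Gamma$ and $\trop(C\cap C'')=\calD$. The basic idea is to first find some curve $C_2$ with $\trop(C_2)=\Gamma'$ realizing $\calD$ as $\trop(C\cap C_2)$, and then apply the lemma to modify $C_2$ into a curve $C''$ whose tropicalization is $\Gamma$, all without disturbing the scheme-theoretic intersection with $C$.

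More precisely, by Kapranov's theorem there exists $C_2\subset\GG_m^2$ with $\trop(C_2)=\Gamma'$. For a sufficiently generic such $C_2$, the tropicalization of the algebraic intersection $\trop(C\cap C_2)$ equals the stable intersection $\Gamma\cdot\Gamma'$ as a divisor; this is the Osserman--Payne formula cited in the introduction when $\Gamma$ and $\Gamma'$ meet properly, and follows by a limiting argument on small translates of $\Gamma'$ in the non-transverse case. Since $\Gamma$ and $\Gamma'$ share the same dual polygon, the Newton polygons of $C$ and $C_2$ coincide, and in particular the Newton polygon of $C_2$ is contained in that of $C$. This is precisely the hypothesis of Lemma \ref{lem:anyCurve}, so applying it with $C_1=C$ produces a curve $C''$ with $\trop(C'')=\trop(C)=\Gamma$ and $C\cap C_2=C\cap C''$ as schemes. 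It follows that $\trop(C\cap C'')=\trop(C\cap C_2)=\calD$, so $\calD\in\calR_C$.

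The main obstacle lies in the genericity step: verifying the existence of $C_2$ with $\trop(C\cap C_2)=\Gamma\cdot\Gamma'$ \emph{as a divisor}, including multiplicities, rather than merely as a set. Once this preliminary point is secured, the remainder of the argument is a one-line application of Lemma \ref{lem:anyCurve}, since the shared dual polygon automatically guarantees the Newton polygon containment.
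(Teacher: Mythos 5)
Your proposal follows essentially the same route as the paper: pick any algebraic lift of $\Gamma'$ (Kapranov), observe that the shared dual polygon gives the Newton polygon containment, and apply Lemma~\ref{lem:anyCurve} with $C_1=C$ to replace the lift by a curve $C''$ with $\trop(C'')=\Gamma$ and the same scheme-theoretic intersection with $C$. Two small remarks on the step you single out as the ``main obstacle.'' When $\Gamma$ and $\Gamma'$ intersect properly, no genericity of $C_2$ is needed: the result of \cite{OP} gives $\trop(C\cap C_2)=\Gamma\cdot\Gamma'$ as a divisor, with multiplicities, for \emph{every} $C_2$ tropicalizing to $\Gamma'$, which is exactly what the paper's one-line proof uses implicitly. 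On the other hand, your ``limiting argument on small translates'' for the non-transverse case is not actually an argument: degenerating the tropical translates $\Gamma'+\epsilon v$ back to $\Gamma'$ does not by itself produce an algebraic curve $C_2$ with $\trop(C_2)=\Gamma'$ \emph{and} $\trop(C\cap C_2)=\Gamma\cdot\Gamma'$ (when $\Gamma'$ overlaps $\Gamma$, e.g.\ $\Gamma'=\Gamma$, the tropicalized intersection of a lift need not be the stable intersection at all --- that is the phenomenon this paper studies). The paper's proof silently has the same restriction, and it is harmless for how the corollary is used (Proposition~\ref{prop:degreesOfFreedom} only intersects $\Gamma$ with curves meeting it properly), but you should either restrict to properly intersecting $\Gamma'$ or supply a genuine degeneration of algebraic curves rather than of tropical ones.
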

\begin{proof}
Let $\calD=\Gamma\cdot\Gamma'$ where $\Gamma'$ has the same dual polygon as $\Gamma$. Choose $C'$ such that $\trop(C')=\Gamma'$. Lemma \ref{lem:anyCurve} then tells us that there is a curve $C''$ such that $C\cap C'=C\cap C''$ and $\trop(C'')=\trop(C)=\Gamma$. In particular, $\calD=\trop(C\cap C'')\in\calR_C$.
\end{proof}

As the next example shows, $\calR_{\Gamma}^{st}$ does not contain all the realizable divisors. Nonetheless, Proposition \ref{prop:degreesOfFreedom} will show that it contains a large dimensional set of divisors.

\begin{example}[{$\calR_{\Gamma}^{st}\neq\calR_C$}]
\label{ex:stable-int-realizes-large-diml-set}
Let $t$ be a uniformizer for $K$ and let $C$ be the zero set of $f:=t+x+y+txy$. Its tropicalization $\Gamma$, depicted in Figure \ref{ex:tropicalCurve}, has vertices at $(-1,-1)$ and $(1,1)$.

Let $\calD = p_1 + p_2$, where $p_1$ and $p_2$ is any pair of points on the diagonal edge of $\Gamma$. Then we may find a tropical curve with the same dual polygon, whose stable intersection with $\Gamma$ consists of $p_1$ and $p_2$, see Figure \ref{subfig:2a}. By Corollary \ref{cor:anyCurve}, $\calD$ is $C$-realizable. Similarly, Figure \ref{subfig:2b} illustrates how Corollary \ref{cor:anyCurve} realizes divisors that consist of a chip on the vertical ray emanating from $(-1,-1)$ and a chip on the horizontal ray emanating from $(1,1)$.

In Figure \ref{subfig:2c} we use the same technique to realize 2 chips at a single point along the diagonal edge of $\Gamma$. Notice that the stable intersection of the two tropical curves has 2 chips at their point of intersection since the tangent directions there generate a sublattice of index 2. Observe that, although their \emph{subdivided} dual polygons differ, the dual polygons of the two tropical curves are the same; hence Corollary \ref{cor:anyCurve} still applies here.

Finally, there are many divisors in $\calR_C$ that are not in $\calR_{\Gamma}^{st}$, such as the one depicted in Figure \ref{subfig:2d}. There is no curve with the same dual polygon as $f$ whose stable intersection with $\Gamma$ is $(1,2) + (1,3)$. On the other hand, this divisor is in $\calR_C$ since it is the tropicalization of the intersection of $C$ with the curve 
\[
g = ta + bx + cy + tdxy,
\]
where 
\[
b=a+\beta-t\gamma-t^2\beta-t^5\delta,\quad c=a+\beta-t\gamma-t^5\delta,\quad d=a+\beta-t\gamma
\]
and $a,\beta,\gamma,\delta$ are generic with valuation 0.

\begin{figure}[h]
\centering
\begin{tikzpicture}[scale=.5]


\begin{scope}[shift={(5,0)}]
\draw[blue] (-1,-1) to (1,1);
\draw[blue] (-2,-1) to (-1,-1);
\draw[blue] (-1,-2) to (-1,-1);
\draw[blue] (1,1) to (2,1);
\draw[blue] (1,1) to (1,2);

\draw[purple, thick] (-0.7,-0.7) to (0.4,0.4);
\draw[purple, thick] (-0.7,-0.7) to (-1.2,-0.7);
\draw[purple, thick] (-0.7,-0.7) to (-0.7,-1.2);
\draw[purple, thick] (0.4,0.4) to (0.9,0.4);
\draw[purple, thick] (0.4,0.4) to (0.4,0.9);

\draw [fill=purple,radius=.1] (-0.7,-0.7) circle;
\draw [fill=purple,radius=.1] (0.4,0.4) circle;
\node at (0,-2.5) {\parbox{0.3\linewidth}{\subcaption{}\label{subfig:2a}}};
\end{scope}

\begin{scope}[shift={(10,0)}]
\draw[blue] (-1,-1) to (1,1);
\draw[blue] (-2,-1) to (-1,-1);
\draw[blue] (-1,-2) to (-1,-1);
\draw[blue] (1,1) to (2,1);
\draw[blue] (1,1) to (1,2);

\draw[purple, thick] (-0.7,-1.3) to (1.6,1);
\draw[purple, thick] (-0.7,-1.3) to (-1.2,-1.3);
\draw[purple, thick] (-0.7,-1.3) to (-0.7,-1.8);
\draw[purple, thick] (1.6,1) to (2.1,1);
\draw[purple, thick] (1.6,1) to (1.6,1.5);

\draw [fill=purple,radius=.1] (-1,-1.3) circle;
\draw [fill=purple,radius=.1] (1.6,1) circle;
\node at (0,-2.5) {\parbox{0.3\linewidth}{\subcaption{}\label{subfig:2b}}};
\end{scope}

\begin{scope}[shift={(15,0)}]
\draw[blue] (-1,-1) to (1,1);
\draw[blue] (-2,-1) to (-1,-1);
\draw[blue] (-1,-2) to (-1,-1);
\draw[blue] (1,1) to (2,1);
\draw[blue] (1,1) to (1,2);

\draw[purple, thick] (-0.7,0.7) to (0.7,-0.7);
\draw[purple, thick] (-0.7,0.7) to (-1.2,0.7);
\draw[purple, thick] (-0.7,0.7) to (-0.7,1.2);
\draw[purple, thick] (0.7,-0.7) to (1.2,-0.7);
\draw[purple, thick] (0.7,-0.7) to (0.7,-1.2);

\draw [fill=purple,radius=.1] (0,0) circle;
\node at (0,-2.5) {\parbox{0.3\linewidth}{\subcaption{}\label{subfig:2c}}};
\end{scope}

\begin{scope}[shift={(20,0)}]
\draw[blue] (-1,-1) to (1,1);
\draw[blue] (-2,-1) to (-1,-1);
\draw[blue] (-1,-2) to (-1,-1);
\draw[blue] (1,1) to (2,1);
\draw[blue] (1,1) to (1,2);

\draw [fill=purple,radius=.1] (1,1.3) circle;
\draw [fill=purple,radius=.1] (1,1.7) circle;
\node at (0,-2.5) {\parbox{0.3\linewidth}{\subcaption{}\label{subfig:2d}}};
\end{scope}
\end{tikzpicture}
\caption{Four realizable divisors on $\trop(f)$.}
\label{ex:tropicalCurve}
\end{figure}
\end{example}

Although $\calR_{\Gamma}^{st}\neq\calR_C$, as Example \ref{ex:stable-int-realizes-large-diml-set} illustrates, we now show that it nonetheless contains a large dimensional set of divisors. Our next main goal for the rest of this subsection is to prove:

\begin{proposition} \label{prop:degreesOfFreedom}
Let $\Gamma$ be a tropical plane curve of genus $g$ with $d$ vertices. 
Then $\calR_{\Gamma}^{st}$ contains a $(d-g)$-dimensional set of divisors.
\end{proposition}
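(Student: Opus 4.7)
The plan is to exhibit an explicit $(d-g)$-dimensional family of divisors in $\calR_\Gamma^{st}$ by parameterizing the curves $\Gamma'$ via their defining tropical polynomials and computing the differential of the stable intersection map.

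First, the dimension count. Let $P$ be the Newton polygon of $\Gamma$ and write $a, b$ for the numbers of its interior and boundary lattice points, so $N := |P \cap \ZZ^2| = a + b$. Since $\Gamma$ is smooth, its dual subdivision is unimodular; in particular $g$ equals $a$ while the number of vertices of $\Gamma$ equals $2\mathrm{Area}(P) = 2a + b - 2 = d$, and Pick's theorem gives $N - 1 = d - g + 1$. A tropical curve $\Gamma'$ with Newton polygon $P$ is cut out by a tropical polynomial $f'$ supported on $P \cap \ZZ^2$, unique up to an additive tropical constant. Consequently, the space $\mathcal{U}$ of such curves is parameterized by an open subset of $\RR^{N-1}$ of dimension $d - g + 1$, and the stable intersection map $\Psi \colon \mathcal{U} \to \Gamma^d/S_d$, $\Psi(\Gamma') = \Gamma \cdot \Gamma'$, has image $\calR_\Gamma^{st}$.

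The bulk of the proof is to show that the image of $\Psi$ has dimension at least $d-g$. To do so, I would choose a generic $\Gamma'_0 \in \mathcal{U}$ so that $\Gamma \cap \Gamma'_0$ consists of $d$ distinct transverse points lying in the interiors of edges of $\Gamma$, and compute the differential $d\Psi$ at $\Gamma'_0$. Perturbing the coefficient of the defining polynomial indexed by a lattice point $(i,j)$ shifts the cell of $\Gamma'_0$ dual to $(i,j)$; this in turn slides each chip of $\Gamma \cdot \Gamma'_0$ lying on an edge of $\Gamma'_0$ adjacent to that cell, along the host edge of $\Gamma$ on which that chip sits. Collectively these perturbations give an explicit linear map $L \colon \RR^{N-1} \to \RR^d$, and the proposition reduces to showing that $\mathrm{rank}(L) \geq d-g$, i.e.~that $\ker L$ is at most one-dimensional.

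The principal obstacle is this rank computation. The matrix of $L$ can be read off from the incidence structure of the subdivided dual polygon of $\Gamma'_0$ together with the primitive edge directions at each vertex; the balancing condition on $\Gamma$ should produce exactly one relation among the columns of $L$, giving a one-dimensional kernel and hence rank $d - g$. I expect the cleanest way to carry out this computation is to work with perturbations indexed by the lattice points of $P$ and organize them according to a spanning tree of the dual subdivision, with the $g$ extra perturbations corresponding to the independent cycles of the subdivision providing precisely the degrees of freedom that one needs beyond the tree, matching tropical Riemann--Roch.
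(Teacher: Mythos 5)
Your setup and dimension count are fine, and in outline your strategy is the same as the paper's: the paper parameterizes the curves $\Gamma'$ with the same (subdivided) dual polygon by a translation vector $\eta\in\RR^2$ together with the bounded edge lengths subject to the cycle-closing conditions, which is just another chart on your $(d-g+1)$-dimensional space, and it likewise bounds the dimension of the fibers of the stable-intersection map. The gap is in the one step that carries all the content: the claim that for a \emph{generic} $\Gamma'_0$ the kernel of $L$ is at most one-dimensional. This is false as stated. The mechanism is that all chips lying on a common edge of $\Gamma'_0$ move in a coupled way: a deformation of $\Gamma'_0$ within its combinatorial type keeps every edge direction fixed, so it can only translate the line supporting a given edge, and hence $\mathrm{rank}(L)$ is bounded above by the number of edges of $\Gamma'_0$ that actually meet $\Gamma$. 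Concretely, let $\Gamma$ be a smooth tropical conic ($d=4$, $g=0$, so $d-g+1=5$) with two rays in each of the directions $(-1,0)$, $(0,-1)$, $(1,1)$, and let $\Gamma'_0=\Gamma+(M^2,M)$ for $M\gg 0$. Then $\Gamma\cap\Gamma'_0$ consists of four distinct transverse multiplicity-one points lying in the interiors of the two northeast rays of $\Gamma$ --- all of your stated genericity requirements hold --- yet all four chips lie on the two westward rays of $\Gamma'_0$, so $\mathrm{rank}(L)\le 2<4=d-g$ and $\ker L$ is at least three-dimensional. Moreover this intersection pattern is stable under small perturbations of all coefficients, so the bad locus has nonempty interior and cannot be avoided by genericity alone; the base point must be chosen deliberately close to $\Gamma$ (the paper takes $\Gamma'_0=\Gamma+\eta$ with $\eta$ small and generic).

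Even with the correct choice of base point, the assertion that ``the balancing condition on $\Gamma$ should produce exactly one relation among the columns of $L$'' is precisely what has to be proved, and the spanning-tree heuristic does not prove it (nor is the balancing condition really the source of the relation). The paper's argument is a pinning analysis: for $\Gamma'_0=\Gamma+\eta$ it shows that every vertex of $\Gamma'_0$ not lying in the distinguished unbounded region $\Omega_\eta$ of $\RR^2\setminus\Gamma$ (the one with $\Omega_\eta+\eta\subset\Omega_\eta$) is forced to stay fixed by any deformation preserving $\Gamma\cdot\Gamma'_0$ --- a vertex with two edges meeting $\Gamma$ is pinned, pinnedness propagates along chains of neighbours, and the recession-fan argument rules out a chain escaping through two unbounded rays in a region other than $\Omega_\eta$ --- while the unpinned vertices along $\partial\Omega_\eta$ move in a single one-parameter chain, determined by the position of one of them. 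Supplying an argument of this type (or an honest rank computation for your matrix $L$ at this particular base point) is what is missing; once it is in place, your count $(d-g+1)-1=d-g$ does give the proposition.
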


In fact, the $(d-g)$-dimensional set of divisors we obtained will be realized by intersecting $\Gamma$ with tropical curves $\Gamma'$ having the same \emph{subdivided} dual polygon as $\Gamma$. Before proceeding with the proof, we illustrate the main ideas with an example.

\begin{example}[{Illustrating the proof of Proposition \ref{prop:degreesOfFreedom}}]
\label{ex:illustrating-prop:degreesOfFreedom}
Let $\Gamma$ be the tropical curve shown in Figure \ref{subfig:3a}. Its vertices are labeled $v_1,\dots,v_4$ and bounded edges are labeled $e_1,\dots,e_4$. Let $\mathcal{S}_\Gamma$ be the set of tropical curves with the same subdivided Newton polygon as $\Gamma$. If $\Gamma'\in\mathcal{S}_\Gamma$, then its vertices $v'_1,\dots,v'_4$ and bounded edges $e'_1,\dots,e'_4$ are in canonical bijection with those of $\Gamma$. Moreover, $\Gamma'$ is completely determined once we fix the location of $v'_4$ and the lengths of the $e'_i$. In Figure \ref{subfig:3b}, we take $\Gamma'$ to be a small translate of $\Gamma$, i.e.~$\Gamma'=\Gamma+\eta$ where $\eta$ is a small generic vector; the blue curve represents $\Gamma$ and the red curve $\Gamma'$.

Now, here is the key point: consider all $\Gamma''\in\mathcal{S}_\Gamma$ that are obtained from $\Gamma'$ by small perturbations to the edge lengths $e'_1,\dots,e'_4$ while keeping the location of $v'_4$ fixed. There is a $2$-dimensional family of such $\Gamma''$, since there are two linear conditions coming from the fact that the cycle must close (a linear condition from the $x$-coordinates and a linear condition from the $y$-coordinates). Corollary \ref{cor:anyCurve} tells us that each such $\Gamma\cdot\Gamma''$ is realizable. We are interested in computing the dimension of the locus of divisors $\Gamma\cdot\Gamma''$ obtained in this manner. In this particular example, we see there is a $1$-dimensional family of such $\Gamma\cdot\Gamma''$: the purple chip pictured in Figure \ref{subfig:3b} is the only chip which can vary in this family. Combined with the fact that we have a $2$-dimensional space of choices for $\eta$, we obtain a $3=d-g$ dimensional locus of  realizable divisors.

In this particular example, we are able to directly see there is a $1$-dimensional family of divisors $\Gamma\cdot\Gamma''$ where $\Gamma''$ is a small perturbation of $\Gamma'$ without moving $v_4'$. More generally, the proof of Proposition \ref{prop:degreesOfFreedom} proceeds as follows. Rather than directly computing this dimension, we instead ask which $\Gamma''$ have the property that $\Gamma\cdot\Gamma''=\Gamma\cdot\Gamma'$. In the example, the curves $\Gamma''$ with this property are precisely those that are obtained from $\Gamma'$ by varying the lengths of $e'_1$ and $e'_3$ while keeping the lengths of $e'_2$ and $e'_4$ constant.  For such $\Gamma''$, the locations of $v'_1$ and $v'_4$ are fixed; in accordance with Definition \ref{def:pinned} below, we say that these vertices are \emph{pinned} since their location is constant throughout the family. We see that the only non-pinned vertices are $v'_2$ and $v'_3$, and these vertices are precisely those which live in the topmost unbounded region of $\RR^2\setminus\Gamma$. This region, which we denote by $\Omega_\eta$, is special since it corresponds to the unique maximal cone of the recession fan which contains $\eta$.

In general, Proposition \ref{prop:degreesOfFreedom} is proven by showing that (i) if $v'_i\notin\Omega_\eta$ then it is pinned, (ii) the $v'_i\in\Omega_\eta$ are not pinned but the location of one such $v'_i\in\Omega_\eta$ determines the location of all other $v'_j\in\Omega_\eta$.  
Hence, the $\Gamma''$ with $\Gamma\cdot\Gamma''=\Gamma\cdot\Gamma'$ vary in a $1$-dimensional family. 
Applying this observation back to the example at hand, the dimension of the space of divisors $\Gamma\cdot\Gamma''$ obtained from perturbations of the edge lengths of $\Gamma'$ is $2-1=1$, which is the same answer we obtained above. Combined with the fact that there are $2$-dimensions worth of choices for $\eta$, we obtained a $1+2=3$ dimensional locus of realizable divisors.

\begin{figure}[h]
\centering
\begin{tikzpicture}[scale=.5]


\begin{scope}[shift={(-5,0)}, scale=1]
\draw[blue] (-1.5,-1) -- (1.5,-1) -- (1.5,1) -- (-1.5,1) -- (-1.5,-1);
\draw[blue] (-1.5,-1) -- (-2.5,-2);
\draw[blue] (1.5,1) -- (2.5,2);
\draw[blue] (1.5,-1) -- (2.5,-2);
\draw[blue] (-1.5,1) -- (-2.5,2);

\node [below] at (1.5,-1) {$v_1$};
\node [above] at (1.5,1) {$v_2$};
\node [above] at (-1.5,1) {$v_3$};
\node [below] at (-1.5,-1) {$v_4$};

\node [right] at (1.5,0) {$e_1$};
\node [above] at (0,1) {$e_2$};
\node [left] at (-1.5,0) {$e_3$};
\node [below] at (0,-1) {$e_4$};


\node at (0,-2.5) {\parbox{0.3\linewidth}{\subcaption{}\label{subfig:3a}}};
\end{scope}

\begin{scope}[shift={(5,0)}, scale=1]
\draw[blue] (-1.5,-1) -- (1.5,-1) -- (1.5,1) -- (-1.5,1) -- (-1.5,-1);
\draw[blue] (-1.5,-1) -- (-2.5,-2);
\draw[blue] (1.5,1) -- (2.5,2);
\draw[blue] (1.5,-1) -- (2.5,-2);
\draw[blue] (-1.5,1) -- (-2.5,2);

\draw[red] (-1.25,-0.5) -- (1.75,-0.5) -- (1.75,1.5) -- (-1.25,1.5) -- (-1.25,-0.5);
\draw[red] (-1.25,-0.5) -- (-2.25,-1.5);
\draw[red] (1.75,1.5) -- (2.75,2.5);
\draw[red] (1.75,-0.5) -- (2.75,-1.5);
\draw[red] (-1.25,1.5) -- (-2.25,2.5);

\draw [fill=purple,radius=.1] (1.75,1.25) circle;

\node at (0,-2.5) {\parbox{0.3\linewidth}{\subcaption{}\label{subfig:3b}}};
\end{scope}

\end{tikzpicture}
\caption{Illustrating the proof of Proposition \ref{prop:degreesOfFreedom}.}
\label{ex:illustrating-degs-of-freedom}
\end{figure}

\end{example}


Having now outlined (in Example \ref{ex:illustrating-prop:degreesOfFreedom}) the general strategy we use to prove Proposition \ref{prop:degreesOfFreedom}, let us begin by establishing some notation. Denote by $E_0(\Gamma)$ the set of bounded edges of $\Gamma$, and let $\mathcal{S}_\Gamma$ be the set of tropical curves $\Gamma'$ with the same subdivided dual polygon as $\Gamma$. Notice that if $\Gamma'\in\mathcal{S}_\Gamma$, then it is obtained by translating $\Gamma$ by some $\eta\in\RR^2$ and then varying the edge lengths of $\Gamma$. Our goal is to understand the space of divisors $\Gamma\cdot\Gamma'$ obtained by letting $\Gamma'$ vary through the elements of $\mathcal{S}_\Gamma$.

We will study $\mathcal{S}_\Gamma$ by breaking it up into more manageable pieces. For each $\Gamma'\in\mathcal{S}_\Gamma$, there is a canonical bijection between the vertices of $\Gamma$ and those of $\Gamma'$, which we denote $\varphi_{\Gamma'}$. For a fixed generic $\eta\in\RR^2$, let $v_\eta\in\Gamma$ be the unique vertex whose dot product with $\eta$ is minimal.\footnote{$v_\eta$ plays the same role as $v_4$ in Example \ref{ex:illustrating-prop:degreesOfFreedom}. Similarly, $v_{\Gamma',\eta}$ plays the role of $v'_4$.}.  Denote $v_{\Gamma',\eta} = \varphi_{\Gamma'}(v_{\eta})$  the vertex of $\Gamma'$ corresponding to $v_\eta$, and let
\[
\mathcal{S}_{\Gamma,\eta}:=\{\Gamma'\in\mathcal{S}_\Gamma\mid v_{\Gamma',\eta}=v_\eta+\eta\}
\]
the set of tropical curves obtained from $\Gamma+\eta$ by varying the edge lengths of $E_0(\Gamma)$ while fixing the position of the vertex $v_\eta+\eta$. That is, we are assigning a length to each element of $E_0(\Gamma)$ subject to the condition that each cycle closes. We may therefore identify $\mathcal{S}_{\Gamma,\eta}$ with an open subset of a vector subspace $V_{\Gamma,\eta}\subset\RR^{|E_0(\Gamma)|}$ of dimension $|E_0(\Gamma)|-2g$. 

Finally, we define  a  piecewise linear map
\[
\Psi_{\Gamma,\eta}:V_{\Gamma,\eta}\to\DDIV(\Gamma)
\]
sending a tropical curve $\Gamma'$ to $\Gamma\cdot\Gamma'$. 

\begin{definition}
\label{def:pinned}
Let $\Gamma'\in\mathcal{S}_{\Gamma,\eta}$, and denote $F=\Psi_{\Gamma,\eta}^{-1} (\Psi_{\Gamma,\eta}(\Gamma'))$, the collection of curves $\Gamma''\in V_{\Gamma,\eta}$ for which $\Gamma\cdot\Gamma'=\Gamma\cdot\Gamma''$. 
 We say that a vertex $v'$ of $\Gamma'$ is $\eta$-\emph{pinned} (or \emph{pinned} when $\eta$ is understood) if there is an open neighborhood $U\subset V_{\Gamma,\eta}\cap F$ of $\Gamma'$ such that $U$ is contained in a domain of linearity of $\Psi_{\Gamma,\eta}$, and $\varphi_{\Gamma''}(v')=v'$ for all $\Gamma''\in U$.
\end{definition}

In other words, a vertex is pinned if moving it changes the stable intersection with $\Gamma$.

\begin{remark}
By the very definition of $\mathcal{S}_{\Gamma,\eta}$, if $\Gamma'$ is in a domain of linearity of $\Psi_{\Gamma,\eta}$, then $v_{\Gamma',\eta}$ is pinned.
\end{remark}

Throughout the rest of this section, we say that two vertices $v'_1$ and $v'_2$ of $\Gamma'$ are \emph{neighbors} if they are connected by an edge of $\Gamma'$.

\begin{lemma}
\label{l:most-verticies-pinned}
Let $\eta\in\RR^2$ be generic with sufficiently small norm, and let $\Gamma'=\Gamma+\eta$. Then the following hold:
\begin{enumerate}
\item $\Gamma'$ is in the interior a domain of linearity of $\Psi_{\Gamma,\eta}$,
\item there is a unique connected component $\Omega_\eta$ of $\RR^2\setminus\Gamma$ such that $\Omega_\eta+\eta\subset\Omega_\eta$,
\item if $v'\in\Gamma'$ is a vertex with $v'\notin\Omega_\eta$, then $v'$ is pinned.
\end{enumerate}
\end{lemma}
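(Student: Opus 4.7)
The plan is to deduce (1) and (2) from the convex polyhedral structure of $\Gamma$ and $\RR^2\setminus\Gamma$, and to prove (3) via a line-constraint analysis based on opposite edges.

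For (1), since $\eta$ is generic and small, $\Gamma$ meets $\Gamma'=\Gamma+\eta$ transversally at a finite set of interior points of edges of both curves. Transversality is an open condition, so the same combinatorial intersection pattern persists for all $\Gamma''$ in some neighborhood of $\Gamma'$ inside $\mathcal{S}_{\Gamma,\eta}$, and each intersection point is the affine intersection of two lines whose positions are affine in the edge lengths; hence $\Psi_{\Gamma,\eta}$ is affine on this neighborhood, placing $\Gamma'$ in the interior of a domain of linearity. For (2), each connected component of $\RR^2\setminus\Gamma$ is an open convex polyhedron (the locus where a single monomial of a tropical polynomial defining $\Gamma$ attains the maximum), and the recession cones of the unbounded components are the two-dimensional cones of the complete recession fan of $\Gamma$. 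For $\eta$ generic, there is a unique such cone with $\eta$ in its relative interior, and I take $\Omega_\eta$ to be the corresponding region. Convexity of $\Omega_\eta$ together with $\eta$ lying in its recession cone yields $\Omega_\eta+\eta\subset\Omega_\eta$.

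For (3), let $v'=v+\eta\notin\Omega_\eta$. I will exhibit two linearly independent linear constraints on $v'':=\varphi_{\Gamma''}(v')$ forcing $v''=v'$ for $\Gamma''$ in a small neighborhood of $\Gamma'$ inside the fiber $F$. The basic mechanism is that each edge $e'$ of $\Gamma''$ incident to $v''$ has a fixed primitive direction $u_{e'}$, so its supporting line depends on $v''$ only modulo $\RR u_{e'}$; whenever $e'$ meets $\Gamma$ transversally at a point of $\Gamma\cdot\Gamma''$, preservation of that intersection forces $v''\equiv v'\pmod{\RR u_{e'}}$. The first constraint comes from the ``opposite edge at $v$'': letting $\sigma$ be the sector at $v$ containing $\eta$ and $e_{\mathrm{opp}}$ the edge of $\Gamma$ opposite to $\sigma$, a direct local calculation shows that $e'_{\mathrm{opp}}$ at $v'$ crosses one of the two edges of $\Gamma$ bounding $\sigma$ transversally near $v$; this intersection lies on the stable intersection, giving a constraint in direction $u_{e_{\mathrm{opp}}}$.

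The second constraint is obtained by induction on the graph distance $d(v)$ from $v$ to $\overline{\Omega_\eta}$ in $\Gamma$. In the base case $d(v)=1$, I pick the neighbor $w\in\overline{\Omega_\eta}$; because $v\notin\overline{\Omega_\eta}$ the edge $e$ joining $v$ and $w$ cannot bound $\Omega_\eta$ at $w$, so it must be the opposite edge at $w$, and the first-constraint analysis applied at $w$ pins the supporting line of the translated edge $e'$, yielding a second constraint in direction $u_e$. For $d(v)>1$, I pick a neighbor $w$ with $d(w)<d(v)$ joined to $v$ by an edge $e\ne e_{\mathrm{opp}}$; by induction $w$ is pinned, so the now-fixed position of $w''$ together with the prescribed direction $u_e$ determines the supporting line of $e'$, again yielding a constraint in direction $u_e$. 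Since $u_{e_{\mathrm{opp}}}$ and $u_e$ are the primitive directions of two distinct edges at a trivalent vertex, they are linearly independent, and $v''=v'$.

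The main obstacle lies in the inductive step: I must argue that for every $v$ with $d(v)>1$ some neighbor $w$ satisfies $d(w)<d(v)$ via an edge different from $e_{\mathrm{opp}}$. Geometrically, this reflects that $e_{\mathrm{opp}}$ points against the $\eta$-direction while the path toward $\overline{\Omega_\eta}$ goes with it, but turning this intuition into a uniform combinatorial statement---especially when $\Gamma$ is tree-like in genus $0$---is the subtle point.
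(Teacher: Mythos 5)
Your parts (1) and (2) are fine and essentially the paper's argument (transversality at $d$ interior points of edges, convexity of the complementary regions, and the completeness of the recession fan). The problem is part (3), and the gap you flag yourself is genuine, not a technicality. Your scheme needs, at every vertex $v$ with $v'\notin\Omega_\eta$, two constraint directions coming from two \emph{distinct} edges at $v$: the opposite edge $e_{\mathrm{opp}}(v)$ (whose translate crosses $\Gamma$ near $v$ -- that local computation is correct) and the edge $e$ toward a closer/pinned neighbor. You never rule out $e=e_{\mathrm{opp}}(v)$, and this already threatens the base case, not just the inductive step: the intuition that $e_{\mathrm{opp}}(v)$ ``points against $\eta$'' is not uniformly valid, since $u_{e_{\mathrm{opp}}}\cdot\eta$ can be positive, and the two sectors opposite a common edge $e$ at its two endpoints can overlap (for instance, directions $(1,2),(0,-1),(-1,-1)$ at $v$ and $(1,1),(1,0),(-2,-1)$ at $w$ with $\eta$ pointing roughly along $(1,-2)$ put $\eta$ in the opposite sector at both endpoints). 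In such a zigzag situation the edge joining $v$ to its unique closer neighbor can itself be $e_{\mathrm{opp}}(v)$, and then both of your constraints are modulo the same line $\RR u_e$, so $v''$ is not pinned by your argument. Since in a tree-like (genus $0$) curve there may be only one edge decreasing the distance to $\overline{\Omega_\eta}$, the induction cannot simply choose a different neighbor, so the missing combinatorial claim is exactly where the proof breaks.

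The paper sidesteps this by organizing the argument by \emph{regions} rather than by graph distance in $\Gamma$. For a translated vertex $v'_0$ lying in a component $\Omega\neq\Omega_\eta$ of $\RR^2\setminus\Gamma$, one forms the maximal chain $v'_{-s},\dots,v'_r$ of translated vertices lying in $\Omega$, each of which has an edge crossing $\Gamma$ (your first constraint). Because $\Omega\neq\Omega_\eta$, the vector $\eta$ does not lie in the recession cone of $\Omega$, so the two extremal edges of the chain cannot both be rays inside $\Omega$; hence some endpoint of the chain has \emph{two} edges crossing $\Gamma$ and is pinned outright. Pinnedness then propagates back along the chain: at each step the two constraints come from a crossing edge and from the chain edge to the already-pinned neighbor, and these are automatically distinct edges of the trivalent vertex because the chain edges lie inside the convex region $\Omega$ and so never cross $\Gamma$. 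If you want to salvage your induction, you would need to prove a statement playing the role of this ``endpoint with two crossings'' step; as written, your argument is incomplete.
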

\begin{proof}
By our hypothesis on $\eta$, we see $\Gamma'$ intersects $\Gamma$ in $d$ distinct points which lie in the interior of the edges of $\Gamma$. It follows that $\Gamma'$ is in the interior of a domain of linearity.

Consider a connected component $\Omega$ of $\RR^2\setminus\Gamma$. It is convex \cite{SN}, and since $\eta$ is sufficiently small, $(\Omega+\eta)\cap\Omega\neq\varnothing$. If $\Omega$ is a bounded region, then $\Omega+\eta$ is not contained in $\Omega$. By \cite[Corollary 3.10]{recession-cones-form-fan}, the recession cones of $\Gamma$ fit together to form a complete fan, hence $\eta$ is in the interior of a unique maximal cone of the recession fan. As a result, $\Omega+\eta\subset\Omega$ for exactly one unbounded region $\Omega$, proving our second claim.

For the final claim of the lemma, we begin with some trivial observations. Since $\Gamma$ is tropically smooth, it is a balanced trivalent graph. It follows that a vertex $v'\in\Gamma'$ is pinned under any of the following circumstances: (i) $v'$ has two edges that intersect $\Gamma$, (ii) $v'$ has two neighbors that are pinned, (iii) $v'$ has an edge $e'$ that intersects $\Gamma$ as well as a neighbor not coming from $e'$ which is pinned. Finally, we observe that every vertex $v'$ of $\Gamma'$ has at least one edge $e'$ that intersects $\Gamma$. Indeed, $v'=v+\eta$ for some vertex $v$ of $\Gamma$; we choose $e'$ to be the $\eta$-translate of an edge $e$ of $v$ such that the dot product $u_e\cdot\eta<0$, where $u_e$ is the unit vector emanating from $v$ in the direction of $e$.

Now, let $v'_0$ be a vertex of $\Gamma'$ contained in a connected component $\Omega$ of $\RR^2\setminus\Gamma$; assume $\Omega\neq\Omega_\eta$. 
We know $v'_0$ has an edge $e'_0$ intersecting $\Gamma$. If $v'_0$ has two edges intersecting $\Gamma$, then $v'_0$ is pinned. Otherwise, since $\Gamma'$ is trivalent, the remaining two edges of $v'_0$ are contained in $\Omega$. Both of these edges may be unbounded; for any bounded edge, the corresponding neighbor vertex is contained in $\Omega$ and we may apply the same argument to said vertex. The result is a chain of neighboring vertices $v'_{-s},\dots,v'_{-1},v'_0,v'_1,\dots,v'_r$ with the following properties:
\begin{enumerate}
\item Each $v'_i\in\Omega$.
\item Each $v'_i$ has an edge $e'_i$ intersecting $\Gamma$.
\item Let $f'_i$ be the edge connecting $v'_i$ to $v'_{i+1}$. Let $g'_r\neq e'_r,f'_{r-1}$ be the remaining edge of $v'_r$. Let $g'_{-s}\neq e'_{-s},f'_{-s}$ be the remaining edge of $v'_{-s}$. Then $g'_r$ intersects $\Gamma$ or it is unbounded and contained in $\Omega$. Similarly, $g'_{-s}$ intersects $\Gamma$ or it is unbounded and contained in $\Omega$.
\end{enumerate}

We claim that it is impossible for both $g'_r$ and $g'_{-s}$ to be unbounded rays contained in $\Omega$. If $\Omega$ is a bounded region, this is clear. If $\Omega$ is unbounded, then since $v'_{-s}$ and $v'_r$ are $\eta$-translates of vertices of $\Gamma$, the recession cone corresponding to $\Omega$ is generated by the unbounded rays of $g'_{-s}$ and $g'_r$. Since $\Omega\neq\Omega_\eta$, we see $\eta$ is not contained in the recession cone corresponding to $\Omega$, thereby showing that one of these unbounded rays must intersect $\Gamma$. We have therefore established that one of $v'_{-s}$ and $v'_r$ has two edges that intersect $\Gamma$. Without loss of generality, say it is $v'_r$. So, $v'_r$ is pinned. Then $v'_{r-1}$ has a neighbor which is pinned as well as an edge $e'_{r-1}$ which intersects $\Gamma$, so $v'_{r-1}$ is also pinned. Arguing in this manner, we see $v'_r,v'_{r-1},\dots,v'_0$ are all pinned. So, $v'_0$ is pinned, establishing the last of our claims.
\end{proof}

We next analyze the fibers of $\Psi_{\Gamma,\eta}$. This amounts to an analysis of the vertices of $\Gamma'$ contained in $\Omega_\eta$.

\begin{lemma}
\label{l:1-diml-ker-at-Gamma'}
Assume $d>1$, let $\eta\in\RR^2$ be generic with sufficiently small norm, and let $\Gamma'=\Gamma+\eta$. Then there is an open neighborhood $U\subset V_{\Gamma,\eta}$ of $\Gamma'$ such that $U$ is contained in a domain of linearity of $\Psi_{\Gamma,\eta}$ and the fibers of $\Psi_{\Gamma,\eta}|_U$ are $1$-dimensional.
\end{lemma}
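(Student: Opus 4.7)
The plan is to combine Lemma \ref{l:most-verticies-pinned} with a direct dimension count on the vertices of $\Gamma'$ that lie in $\Omega_\eta$.

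First, Lemma \ref{l:most-verticies-pinned}(1) yields an open neighborhood $U\subset V_{\Gamma,\eta}$ of $\Gamma'$ contained in a single domain of linearity of $\Psi_{\Gamma,\eta}$. On $U$ the map $\Psi_{\Gamma,\eta}$ is affine linear and its fibers are affine subspaces, so it suffices to compute the dimension of the fiber through $\Gamma'$. By Lemma \ref{l:most-verticies-pinned}(3), any vertex of $\Gamma'$ outside $\Omega_\eta$ is pinned, hence constant across that fiber, so all variation comes from the vertices in $\Omega_\eta$.

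Next I would describe those vertices. Since $\Omega_\eta$ is the convex unbounded component of $\RR^2\setminus\Gamma$ whose recession cone contains $\eta$, its boundary in $\Gamma$ is a single path of vertices $u_1,\dots,u_k$ joined by bounded edges $e_1,\dots,e_{k-1}$ and terminated by two unbounded rays spanning the recession cone of $\Omega_\eta$. After translation by $\eta$, the vertices of $\Gamma'$ lying in $\Omega_\eta$ are precisely $v'_i = u_i+\eta$, connected inside $\Omega_\eta$ by the translated edges $e'_i$; at each $v'_i$ the remaining edge $f'_i$ transversally crosses $\Gamma$ at a well-defined point $p_i$ close to $u_i$, since $\eta$ is small and generic and the direction of $f_i$ does not lie in the recession cone of $\Omega_\eta$.

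With this structure in place, the fiber condition translates into two sets of linear equations on the perturbed vertex coordinates $v''_i = \varphi_{\Gamma''}(v'_i)$: (a) each $v''_i$ lies on the line $\ell_i$ through $p_i$ of direction $u_{f_i}$, so that the crossing intersection point $p_i$ is preserved; and (b) $v''_{i+1} - v''_i$ is parallel to the fixed direction $u_{e_i}$ of $e_i$. This gives $k+(k-1)=2k-1$ equations on the $2k$ coordinates of $(v''_1,\dots,v''_k)$. By smoothness of $\Gamma$ the three edge directions at each $u_i$ are pairwise non-parallel, so starting from a free choice of $v''_1 \in \ell_1$ one can successively determine each $v''_{i+1}$ as the unique intersection of $\ell_{i+1}$ with the line through $v''_i$ of direction $u_{e_i}$. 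Hence the $2k-1$ equations are independent, the fiber is exactly one-dimensional, and continuity at $\Gamma'$ ensures that nearby such vertex configurations correspond to genuine curves in $V_{\Gamma,\eta}$ via positive edge-length perturbations.

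The main obstacle is the combinatorial assertion that the vertices of $\Gamma'$ in $\Omega_\eta$ form a single path, with exactly one $\Gamma$-crossing edge at each vertex. Both facts rely essentially on the convexity of $\Omega_\eta$, the description of its recession cone via \cite[Corollary 3.10]{recession-cones-form-fan}, and the smallness and genericity of $\eta$; the hypothesis $d>1$ is needed to ensure that $V_{\Gamma,\eta}$ actually has positive dimension so a $1$-dimensional fiber can fit inside it. Once these structural facts are in place, the dimension count above finishes the argument.
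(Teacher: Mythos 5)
Your overall strategy coincides with the paper's: pin every vertex outside $\Omega_\eta$ via Lemma \ref{l:most-verticies-pinned}, note that the vertices of $\Gamma'$ inside $\Omega_\eta$ form a chain along $\partial\Omega_\eta$, each carrying an edge that crosses $\Gamma$ at a point $p_i$, and then propagate: once one chain vertex is placed on the line through its crossing point, the (pairwise independent, by smoothness) directions of the chain edges and crossing edges determine all the others. Phrasing this as $2k-1$ independent linear conditions on $2k$ coordinates rather than as an induction starting from one vertex is only a cosmetic difference.

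There is, however, one genuine gap. The space $V_{\Gamma,\eta}$ is defined so that the distinguished vertex $v_{\Gamma',\eta}=v_\eta+\eta$ is held fixed, so your ``free choice of $v''_1\in\ell_1$'' is only available if $v_\eta$ is \emph{not} among the boundary vertices of $\Omega_\eta$. If it were, the seed of your propagation would itself be immobile, the whole chain would be rigid, and the fiber would be $0$-dimensional rather than $1$-dimensional. This is not a vacuous worry: for a curve with two vertices (as in Example \ref{ex:stable-int-realizes-large-diml-set}) and $\eta$ pointing into one of the two regions whose boundary contains \emph{both} vertices, the fixed vertex lies on $\partial\Omega_\eta$ and the unique movable vertex changes the stable intersection as soon as it moves; so smallness and genericity of $\eta$ alone do not dispose of the issue. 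This is exactly where the hypothesis $d>1$ (together with the choice of $\eta$) enters the paper's proof, via the assertion that $v_\eta\neq v_i$ for every boundary vertex $v_i$ of $\Omega_\eta$, so that none of the chain vertices is a priori pinned. Your stated use of $d>1$ --- that $V_{\Gamma,\eta}$ has positive dimension --- does not supply this: a positive-dimensional ambient space is compatible with a $0$-dimensional fiber. You need to add the verification that the fixed vertex does not lie on $\partial\Omega_\eta$ (equivalently, restrict to $\eta$ for which $v_\eta\notin\partial\Omega_\eta$); with that in place, the rest of your argument matches the paper's and goes through.
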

\begin{proof}
Let $v_{-s},\dots,v_{-1},v_0,v_1,\dots,v_r$ be the vertices on the boundary of $\Omega_\eta$, and let $v'_i=v_i+\eta\in\Omega_\eta$. Notice that a vertex $v'$ of $\Gamma'$ is in $\Omega_\eta$ if and only if $v'=v'_i$ for some $i$. Since $d>1$, $v_\eta\neq v_i$ for any $i$, and so none of the $v_i$ are \emph{a priori} pinned. As in the proof of Lemma \ref{l:most-verticies-pinned}, there exists an edge $e'_i$ of $v'_i$ that intersects $\Gamma$. 
Let $U$ be a sufficiently small neighborhood of $\Gamma'$ and suppose $\Gamma''\in U$ satisfies $\Gamma\cdot\Gamma''=\Gamma\cdot\Gamma'$, i.e.~$\Gamma'$ and $\Gamma''$ are in the same fiber of $\Psi_{\Gamma,\eta}$. Then Lemma \ref{l:most-verticies-pinned} tells us that $\varphi_{\Gamma''}(v')=v'$ for all vertices $v'\in\Gamma'$ with $v'\neq v'_i$. To prove the fibers of $\Psi_{\Gamma,\eta}|_U$ are $1$-dimensional, we show that once $\varphi_{\Gamma''}(v'_0)$ is chosen, it determines all $\varphi_{\Gamma''}(v'_i)$ and hence determines $\Gamma''$.

Let $p_i$ be the intersection point of $e'_i$ and $\Gamma$, and let $u_i$ denote the unit vector in the direction from $p_i$ to $v'_i$. Let $w_i$ be a unit vector in the direction from $v_i$ to $v_{i+1}$. Since $\Gamma\cdot\Gamma''=\Gamma\cdot\Gamma'$, we know $\varphi_{\Gamma''}(v'_0)=v'_0+\epsilon u_0$ with $|\epsilon|>0$ sufficiently small. Since $u_1$ and $w_1$ are linearly independent, there exists a unique solution to the equation $\varphi_{\Gamma''}(v'_0)+\lambda_1w_1=p_1+\mu_1u_1$ with $\lambda_1,\mu_1\in\RR$; this solution is $\varphi_{\Gamma''}(v'_1)$. Proceeding in this manner we see the $\varphi_{\Gamma''}(v'_i)$ are determined.
\end{proof}

We are now ready to prove the main result of this subsection.

\begin{proof}[{Proof of Proposition \ref{prop:degreesOfFreedom}}]
If $d=1$, the statement is clear, so we may assume $d>1$. Choose $\eta\in\RR^2$ generic with sufficiently small norm, and let $\Gamma'=\Gamma+\eta$. Since $\Gamma'$ lies in the locus of linearity of $\Psi_{\Gamma,\eta}$ by Lemma \ref{l:most-verticies-pinned}, we can apply the First Isomorphism Theorem to compute the image of $\Psi_{\Gamma,\eta}$ in a small neighborhood $U$ of $\Gamma'$. By Euler's formula, $|E_0|-d+1=g$ and by Lemma \ref{l:1-diml-ker-at-Gamma'}, the fibers of $\Psi_{\Gamma,\eta}|_U$ are $1$-dimensional. So, $\Psi_{\Gamma,\eta}(U)$ has dimension 
\[
|E_0|-2g-1 = d+g-1-2g-1 = d-g-2.
\]
Combined with the fact that we have a $2$-dimensional space of choices for $\eta$, we see $\calR_{\Gamma}^{st}$ contains a $(d-g)$-dimensional set of divisors.
\end{proof}


\begin{remark}
The boundary of the region $\Omega_\eta$ plays a similar role to that of a \emph{string} that appeared in \cite[Lemma 4.2]{LR} and \cite[Proposition 4.49]{Markwig}.
\end{remark}

\subsection{Proof of Theorem \ref{thm:polyhedral} and the genus $0$ case of Theorem \ref{thm:main}}
\label{subsec:realizable-locus-polyhedral-and-g0}

Making use of Proposition \ref{prop:degreesOfFreedom}, we show that $\widetilde\calR_C$ is a polyhedral complex that is  balanced and of pure dimension $d-g$, thus proving Theorem \ref{thm:polyhedral}.
\begin{proof}[Proof of Theorem \ref{thm:polyhedral}]
Let $X$ be a toric compactification of $\mathbb{G}_m^2$ such that the closure $\overline{C}$ of $C$ is smooth. Consider the linear system on $X$ of curves whose Newton polygon is contained in the Newton polygon of $C$, and denote $\overline{L}$  its restriction to $\overline{C}$.  Let $S\subset\overline{L}$ be the subset of divisors that are supported on $C$, and let $L=S|_C$. By Lemma \ref{lem:anyCurve}, we see that $\trop(L)=\calR_C$. 

Corollary \ref{cor:anyCurve} and Proposition \ref{prop:degreesOfFreedom} imply that $\calR_C$ contains a $(d-g)$-dimensional subset. Since dimension is preserved under tropicalization, the dimension of $L$ is at least $d-g$ as well. On the other hand, by Riemann--Roch, its dimension is at most $d-g$. We conclude that the dimension is exactly $d-g$.

The pullback $\widetilde{L}$  of $L$ via the natural map $C^d\to C^d/{S^d}$  is embedded in a torus via $\widetilde{L}\subset C^d\subset \mathbb{G}_m^{2d}$.
Since tropicalization commutes with pullback, we have $\trop(\widetilde L)=\widetilde\calR_C$. Since $L$ has dimension $d-g$, every irreducible component $\widetilde{L}'$ of $\widetilde{L}$ is also $d-g$ dimensional, hence $\trop(\widetilde{L}')$ is a subset of $\widetilde\calR_C$ that is balanced and of pure dimension $d-g$. It follows that the union of the tropicalizations of these irreducible components, namely $\widetilde\calR_C$, is balanced and of pure dimension $d-g$ as well.
\end{proof}

We are now in a position to prove the genus 0 case of Theorem \ref{thm:main}.
\begin{corollary} 
\label{cor:g-0-liftable}
If $C$ is rational then every divisor of degree $d$ is realizable. That is, $\widetilde\calR_C = \mathrm{Eff}^d(\Gamma)$, the set of degree $d$ effective divisors on $\Gamma$.
\end{corollary}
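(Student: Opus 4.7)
The plan is to revisit the linear system $L$ constructed in the proof of Theorem \ref{thm:polyhedral} and show that, under the rationality hypothesis, $L$ coincides with all of $\mathrm{Eff}^d(C)$. Since $\Gamma$ is a tree when $g=0$, any two effective divisors of the same degree on $\Gamma$ are linearly equivalent (moving chips across edges by harmonic functions), so $|\self|=\mathrm{Eff}^d(\Gamma)$. The claim then reduces to showing $\calR_C=\mathrm{Eff}^d(\Gamma)$.

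Recall from the proof of Theorem \ref{thm:polyhedral} that $\overline{L}$ is the restriction to $\overline{C}$ of the linear system on the toric compactification $X$ of curves with Newton polygon contained in that of $C$, and $L=S|_C$ with $S\subset\overline{L}$ consisting of divisors supported on $C$. The key established facts are $\trop(L)=\calR_C$ and $\dim\overline{L}=\dim L=d-g$. Specializing to $g=0$, since $C$ is rational and $\overline{C}$ is smooth, $\overline{C}\cong\PP^1$; therefore $\overline{L}$ is a projective linear subspace of the complete linear system $|H|\cong\PP^d$ of a degree $d$ divisor $H$ on $\PP^1$. Both $\overline{L}$ and $|H|$ have dimension $d$, so $\overline{L}=|H|$. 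Since every effective degree-$d$ divisor on $\PP^1$ is linearly equivalent to $H$, this gives $\overline{L}=\mathrm{Eff}^d(\overline{C})$.

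Given any $D\in\mathrm{Eff}^d(C)$, its closure $\overline{D}$ on $\overline{C}$ has the same support as $D$, hence $\overline{D}\in S$ and $D=\overline{D}|_C\in L$. Thus $L=\mathrm{Eff}^d(C)$. Tropicalizing -- using that the map $C\to\Gamma$ is surjective because the valuation surjects onto $\RR$, so that every tuple of points in $\Gamma^d$ lifts to a tuple in $C^d$ -- produces
\[
\calR_C=\trop(L)=\trop(\mathrm{Eff}^d(C))=\mathrm{Eff}^d(\Gamma),
\]
which is the asserted equality (the pullback statement $\widetilde\calR_C=\widetilde{\mathrm{Eff}^d(\Gamma)}=\Gamma^d$ is equivalent).

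The only non-routine ingredient is the identification $\overline{L}=\mathrm{Eff}^d(\overline{C})$, and this boils down to a dimension count -- the $d$-dimensionality of $\overline{L}$ comes from the proof of Theorem \ref{thm:polyhedral}, while the fact that the $d$-dimensional complete linear system on $\PP^1$ exhausts all effective degree-$d$ divisors uses only that $\mathrm{Pic}(\PP^1)\cong\ZZ$ via degree. Everything else is routine bookkeeping of supports under passage between $C$ and $\overline{C}$.
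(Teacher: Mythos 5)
Your proof is correct and follows essentially the same route as the paper: both use the linear system $L$ from the proof of Theorem \ref{thm:polyhedral}, the rationality of $C$ to get $\dim L = d$, the conclusion that $L$ exhausts $\mathrm{Eff}^d(C)$, and the surjectivity of tropicalization on effective divisors. Your detour through $\overline{C}\cong\PP^1$ and the complete linear system $|H|$ just makes explicit the dimension-count step that the paper phrases as ``a full dimensional subset of the irreducible $\mathrm{Eff}^d(C)$,'' and your opening remark that $|\self|=\mathrm{Eff}^d(\Gamma)$ in genus $0$ is a useful clarification the paper leaves implicit.
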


\begin{proof}
Let ${L}$ be as in the proof of Theorem  \ref{thm:polyhedral}. Since $C$ is rational, we have  $\dim{L} = d$. But then $L$ is a full dimensional subset of $\text{Eff}^d(C)$, which is irreducible. It follows that $L = \text{Eff}^d(C)$. 
Since the map $\text{Eff}^d(C)\to\text{Eff}^d(\Gamma)$ is surjective, every divisor of degree $d$ on $\Gamma$ is realizable. 
\end{proof}

\section{Internal divisors}
\label{sec:genus1-case}
Having now handled Theorem \ref{thm:main} for rational curves, we turn to the genus $1$ case. Throughout this section, we assume that $\Gamma$ is a smooth tropical plane curve of genus $1$. We denote by $\mathcal{I}_\Gamma$ the collection of internal divisors (Definition \ref{def:internal}) in $|\self|$; 
Our strategy for proving Theorem \ref{thm:main} is as follows. In Lemma \ref{lem:neighbours}, we use the balancing condition for $\widetilde\calR_C$ (Theorem \ref{thm:polyhedral}) to show that if $\sigma_1$ and $\sigma_2$ are adjacent top-dimensional cells of $\mathcal{I}_\Gamma$, and if $\sigma_1\subset\calR_C$, then $\sigma_2\subset\calR_C$. Making use of Proposition \ref{prop:degreesOfFreedom}, we are able to find a top-dimensional cell of $\mathcal{I}_\Gamma$ contained in $\calR_C$, and then in Corollary \ref{cor:irreducible}, we use this to show $\mathcal{I}_\Gamma\subset\calR_C$, thereby proving Theorem \ref{thm:main} for $\Gamma$.

Let us now describe the polyhedral structure of $\internal$ and $|\self|$.
As $|\self|$ may be identified with a subset of $\RR^{2d}/S^{d}$, it has a natural polyhedral structure arising from the metric of $\RR^{2d}$.
Each cell is indexed by choices of vertices $v_1,\ldots,v_\ell$ and edges $e_{\ell+1},\ldots, e_d$. The corresponding cell parametrizes divisors in $|\self|$ where $\ell$ chips are forced to remain at the vertices $v_1,\ldots,v_\ell$, and the rest of the chips are allowed to vary in the edges $e_{\ell+1},\ldots,e_d$ (as long as they maintain linear equivalence with $\self$). A cell is said to be \emph{maximal} if it is not contained in a higher dimensional cell. Maximal cells are given by setting $\ell=0$ and letting all chips vary in the chosen edges. Note that $|\self|$ is not pure dimensional, i.e.~maximal cells may have different dimensions. For instance, a cell that parametrizes divisors supported only on bridge edges has dimension $d$, whereas any other cell has strictly smaller dimension.

Now, since $\internal$ excludes divisors supported away from the cycle, it is, in fact, pure dimensional of dimension $d-1$. Co-dimension $1$ cells of $\internal$ are obtained by forcing a single chip to remain at a vertex. A word of caution is in order. When there are exactly two chips on the cycle, linear equivalence implies that the position of one of them determines the position of the other. In particular, if the cycle is highly symmetric, forcing one chip to remain at a vertex could mean that the other chip is at a vertex throughout the entire cell. Such a situation requires special care, and we give it a name. 

\begin{definition}
A cell of $\internal$ is said to be \emph{exposed} if it parametrizes divisors of the form 
\[
\calD' = v' + v''+ p_3+\ldots + p_{d},
\]
where $v', v''$ are vertices of the cycle, and $p_3,p_4,\ldots, p_d$ are in the interiors of edges that are external to the cycle. 
\end{definition}

The term `exposed' is derived from the fact that there is an adjacent $d$-dimensional cell of $|\self|\setminus\internal$, where the two chips from the vertices are allowed to move away from the cycle. Exposed cells are  adjacent to two maximal cells in $\internal$, in which the 2 chips at the vertices move off in either direction.
For non-exposed co-dimension 1 cells, the adjacent maximal cells are given by letting the chip on the vertex move along one of the adjacent edges. Since $\Gamma$ is trivalent, there are three such cells in $|\self|$. 
The complexes $\internal$ and $|\self|$ coincide at a neighbourhood of every non-exposed cell.

Note that $\internal$ also excludes the cells of $|\self|$ where one chip is at a vertex, and the rest are away from the cycle. Such cells have dimension $d-1$, but are only adjacent to $d$ dimensional cells.  In particular, $\internal$ coincides with the union of maximal cells of $|\self|$ of dimension $d-1$. 

\begin{example}
The divisor depicted in Figure \ref{fig:exposed} is in an exposed $2$-dimensional cell, obtained by letting the two chips on the infinite rays vary. The adjacent $3$-dimensional cells of $\internal$ are given by also letting the chips on the vertices move towards each other or away from each other at equal speed. By letting those chips vary on the adjacent infinite rays, we obtain a $4$-dimensional cell of $|\self|$ which is not in $\internal$. 
\end{example}

\begin{figure}[h]
\centering
\begin{tikzpicture}[scale=.5]


\begin{scope}[shift={(0,0)}, scale=1]
\draw[blue] (-1.5,-1) -- (1.5,-1) -- (1.5,1) -- (-1.5,1) -- (-1.5,-1);
\draw[blue] (-1.5,-1) -- (-2.5,-2);
\draw[blue] (1.5,1) -- (2.5,2);
\draw[blue] (1.5,-1) -- (2.5,-2);
\draw[blue] (-1.5,1) -- (-2.5,2);

\draw [fill=purple,radius=.1] (-2,1.5) circle;
\draw [fill=purple,radius=.1] (1.7,1.2) circle;
\draw [fill=purple,radius=.1] (-1.5,-1) circle;
\draw [fill=purple,radius=.1] (1.5,-1) circle;

\end{scope}

\end{tikzpicture}
\caption{A divisor in an exposed cell of $|\self|$.}
\label{fig:exposed}
\label{ex:internal}
\end{figure}

As we shall now see, the set of internal divisors satisfies an additional desirable property.

\begin{lemma}\label{lem:con1}
If $\Gamma$ has genus $1$, then ${\internal}$ is connected in co-dimension 1.
\end{lemma}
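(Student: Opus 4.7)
The plan is to exhibit, for any two top-dimensional cells of $\internal$, a chain of top-dimensional cells of $\internal$ in which consecutive cells share a codimension-$1$ face lying in $\internal$. Recall from the preceding discussion that a top-dimensional cell $\sigma\subseteq\internal$ is specified by assigning each of the $d$ chips to an edge of $\Gamma$, subject to the constraint that at least one chip lies on a cycle edge, while a codimension-$1$ face of $\sigma$ is obtained by forcing a single chip to a vertex. A direct check of the internality condition shows that such a face lies in $\internal$ if and only if at least one of the remaining $d-1$ chips sits in the interior of a cycle edge. Consequently, two top-dimensional cells are codimension-$1$ adjacent in $\internal$ exactly when their chip assignments differ in a single chip being moved across a shared vertex, while the other $d-1$ chips include at least one on a cycle edge. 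I will call the number of chips of $\sigma$ assigned to cycle edges its \emph{cycle count} $k(\sigma)$; by internality $k(\sigma)\geq 1$.

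First, I would show that any top-dimensional cell $\sigma$ with $k(\sigma)=1$ is connected in $\internal$ to some top-dimensional cell with cycle count at least $2$. Given such $\sigma$, pick a chip $p$ of $\sigma$ lying on a non-cycle edge $e$, and let $e=e_0,e_1,\dots,e_m$ be a sequence of edges of $\Gamma$ with consecutive edges sharing a vertex and $e_m$ a cycle edge (such a sequence exists since $\Gamma$ is connected). Sliding $p$ one step at a time along this sequence produces a chain of top-dimensional cells of $\internal$ joined by codimension-$1$ adjacencies: the unique cycle chip of $\sigma$ is untouched throughout, so the $d-1$ stationary chips at each intermediate face include one in the interior of a cycle edge; the final step deposits $p$ on $e_m$, raising the cycle count to $2$.

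Second, I would show that any two top-dimensional cells with cycle count at least $2$ are connected within $\internal$. The crucial observation is that at a top-dimensional cell with $k\geq 2$, the codimension-$1$ face obtained by forcing any single chip to a vertex automatically lies in $\internal$, since removing the moved chip still leaves at least one chip on a cycle edge. Given two such cells $\sigma,\sigma'$, I would transform the chip assignment of $\sigma$ into that of $\sigma'$ by moving chips one at a time along edge paths in $\Gamma$; whenever the cycle count of an intermediate configuration threatens to drop to $1$, the first step provides a rescue maneuver restoring it to at least $2$ before continuing.

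The main obstacle will be the rigidity of cells of cycle count exactly $1$: the single cycle chip cannot be pushed off the cycle via a codimension-$1$ move, because the resulting face would have zero chips on cycle edges among the $d-1$ non-moving chips and so violate internality. The first step above is designed precisely to sidestep this rigidity by performing preparatory moves that deposit an additional chip onto the cycle before any other rearrangement is attempted.
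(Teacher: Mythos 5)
Your overall shape---first raise the number of chips on the cycle, then connect configurations that have several cycle chips---loosely parallels the paper's reduction (slide external chips onto the cycle, then connect cycle-supported divisors). But there is a genuine gap at the heart of your second step. You treat a cell as a free assignment of chips to edges and assert that moving a single chip across a shared vertex always gives a codimension-$1$ adjacency inside $\internal$. This ignores the constraint that every divisor in a cell of $|\self|$ must stay linearly equivalent to $\self$: on a genus-$1$ curve the chips lying on the cycle are coupled, since the sum of their retractions to the cycle is fixed in $\RR/\ell\ZZ$. Three consequences undermine the sketch. First, with exactly one chip on the cycle its position is pinned, and with exactly two chips the position of one determines the other, so ``moving chips one at a time'' is not an available move; at an exposed cell, for example, the only deformations inside $\internal$ move both cycle chips simultaneously at equal speed. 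Second, the combinatorial cell you intend to move into, or the face ``chip $p$ at vertex $v$'', may simply be empty: as $p$ is pushed toward $v$, a coupled cycle chip can exit its assigned edge first, so which face of the cell you actually reach depends on the metric data and on the class, not just on the edge assignment. Third, even when the move exists, a compensating chip can hit a vertex at the same instant as the moving chip, so the path crosses a cell of codimension at least $2$---exactly what connectivity in codimension $1$ must avoid. Your ``rescue maneuver'' for cycle count $1$ addresses none of these points, and your stated criterion for a codimension-$1$ face to lie in $\internal$ (``iff some remaining chip is interior to a cycle edge'') is not an equivalence, as exposed cells show.

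For comparison, the paper's proof is engineered around precisely these difficulties: given two cycle-supported divisors with $\calD_2=\calD_1+\ddiv\phi$, it takes the explicit path $\calD(t)=\calD_1+\ddiv\max(t,\phi)$ inside $|\self|$, and then perturbs the chips by generic small displacements $\epsilon_1,\dots,\epsilon_d$ along the cycle summing to zero, so that at no time are two chips at vertices simultaneously; genericity, not combinatorial bookkeeping of edge assignments, is what makes the path admissible. The general case is then reduced to cycle-supported divisors by sliding external chips onto the cycle, which is the sound part of your step one. To repair your argument you would need, at minimum, an analogue of this generic perturbation together with a verification that the intermediate faces you claim to cross are nonempty within the fixed linear system $|\self|$.
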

\begin{proof}
Throughout the proof we say that a path in $\internal$ is \emph{admissible} if it avoids cells of co-dimension greater than $1$.


First, let $\calD_1$ and $\calD_2$ be divisors in top dimensional cells of $\internal$ that are supported on the cycle. In particular, they have the same number of chips on the cycle, and this number is at least three. If $\calD_2 = \calD_1 + \ddiv\phi$, then there is a path between them  in $|\self|$ given by $\calD(t) = \calD +  \ddiv\max({t,\phi})$. When $t$ is smaller than the minimum of $\phi$ we have $\calD(t) = \calD_1$, and when $t$ is greater than the max, we have $\calD(t) = \calD_2$. As we will see, this path can be perturbed to become admissible.

In any path in $|\self|$, the position of the chips vary continuously. Denote by $c_1(t), c_2(t), \ldots, c_d(t)$ the  functions describing the position of the different chips at  time $t$. Choose generic $\epsilon_1,\ldots,\epsilon_d\in\RR$ that are arbitrarily small and sum to zero. 
Let $\calD_1'$ and $\calD_2'$ be the divisors obtained from $\calD_1$ and $\calD_2$ by translating  each chip $c_i$ a distance $\epsilon_i$ along the cycle. Similarly, at time  $t$, consider the divisor $\calD(t)'$ obtained from $\calD(t)$ by translating each chip $c_i$ a distance $\epsilon_i$ along the cycle. All of these divisors are equivalent to each other, and $\calD(t)'$ is a continuous path between $\calD_1'$ and $\calD_2'$. Moreover, since the $\epsilon_i$'s were chosen generically, no more than one chip can be at a vertex at any time $t$. Therefore, the path obtained by concatenating the paths from $\calD_1$ to $\calD_1'$ to $\calD_2'$ to $\calD_2$ is admissible. 

The proof will be complete once we show that there is an admissible path from every divisor in $\internal$ to one supported on the cycle. Suppose that $\calD$ is in a top-dimensional cell of $\internal$, and has a chip at a point $p$ outside of the cycle.  Let $v$ be the point on the cycle that is closest to $p$. By continuously sliding the chip towards $v$ until it reaches the cycle, we obtain an admissible path from $\calD$ to $\calD-p+v$. Moreover, since $\calD$ was in $\internal$, the divisor $\calD-p+v$ has at least $2$ chips on the cycle. We may therefore move the chips on $v$ to the interior of an adjacent edge on the cycle in an admissible way. We have shown that there is an admissible path from $\calD$ to a divisor in a top cell with strictly fewer external chips. By induction, there is an admissible path to a divisor that is supported on the cycle. 
\end{proof}

As the next lemma shows, when a cell is realizable, its neighbouring cells are realizable as well.
 \begin{lemma}\label{lem:neighbours}
 Let $\sigma_1$ and $\sigma_2$ be top dimensional cells of $\internal$ intersecting at a co-dimension $1$ cell $\sigma_0$. 
 If $\sigma_1\subset\calR_C$, then $\sigma_2\subset\calR_C$.
  \end{lemma}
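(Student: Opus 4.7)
The key input is Theorem \ref{thm:polyhedral}: $\widetilde\calR_C$ is a balanced polyhedral complex of pure dimension $d-g=d-1$. My strategy is to apply the balancing condition at the codimension-one face $\widetilde\sigma_0$ to propagate positive multiplicity from $\widetilde\sigma_1$ to $\widetilde\sigma_2$. Fix $\widetilde p$ in the relative interior of $\widetilde\sigma_0$; since $\widetilde\sigma_1\subset\widetilde\calR_C$, near $\widetilde p$ a top cell of $\widetilde\calR_C$ extends into $\widetilde\sigma_1$ with positive multiplicity $m_1>0$, and the goal is to force positive multiplicity $m_2>0$ on the $\widetilde\sigma_2$ side.

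If $\sigma_0$ is not exposed, then $|\self|$ and $\internal$ locally agree near $\sigma_0$, so the three $(d-1)$-dimensional cells of $|\self|$ adjacent to $\sigma_0$ are exactly $\sigma_1,\sigma_2,\sigma_3$, obtained by moving the pinned chip along each of the three edges emanating from its vertex $v$. Their primitive outward normals $u_1,u_2,u_3$ at $\widetilde\sigma_0$ are the tangent directions of these edges, and tropical balancing of $\Gamma$ at $v$ gives $u_1+u_2+u_3=0$ as the unique (up to scalar) $\RR$-linear relation, since no two of the $u_i$ are parallel. The balancing equation $\sum m_i u_i=0$ with $m_i\ge 0$ and $m_1>0$ therefore forces $m_1=m_2=m_3>0$.

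If $\sigma_0$ is exposed, then only $\sigma_1,\sigma_2$ lie in $\internal$, with antipodal primitive normals $\pm w$, because linear equivalence synchronizes the two cycle-chips into a single $1$-parameter family moving around the cycle. The adjacent structure in $|\self|$ also includes a $d$-dimensional cell $\tau$ of $|\self|\setminus\internal$, in which both cycle-vertex chips escape onto their external edges, together with its two $(d-1)$-dimensional codimension-one faces $\tau_1,\tau_2$ whose primitive outward normals at $\widetilde\sigma_0$ are the external-edge directions $e',e''$ at the two pinned vertices. Any additional $(d-1)$-dimensional cell of $\widetilde\calR_C$ sitting inside $\widetilde\tau$ and meeting $\widetilde\sigma_0$ as a codimension-one face contributes a primitive normal lying in the cone spanned by $e'$ and $e''$. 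The crucial observation is that $w,e',e''$ are linearly independent in the chip-motion space $\RR^2\times\RR^2$: $w$ has nonzero components in both vertex factors along cycle-edge directions, while $e'$ and $e''$ are each supported in a single vertex factor along an external-edge direction. Applying a linear functional $w^*$ that pairs to $1$ with $w$ and to $0$ with both $e'$ and $e''$ annihilates all $\widetilde\tau$-contributions in the balancing equation at $\widetilde p$, leaving $m_1-m_2=0$, whence $m_2=m_1>0$.

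In either case we obtain a $(d-1)$-dimensional open neighborhood of $\widetilde p$ in $\widetilde\sigma_2$ contained in $\widetilde\calR_C$. Varying $\widetilde p$ over the relative interior of $\widetilde\sigma_0$ and iterating the balancing argument across the remaining codimension-one faces of $\widetilde\sigma_2$, we upgrade this to full containment $\widetilde\sigma_2\subset\widetilde\calR_C$, i.e.\ $\sigma_2\subset\calR_C$. The main obstacle lies in the exposed case, where one might worry that the higher-dimensional cell $\widetilde\tau$ could a priori absorb all of the balancing flux from $\widetilde\sigma_1$; this is resolved by the transversality of the cycle-motion direction $w$ to the external-edge directions $e',e''$.
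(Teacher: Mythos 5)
Your strategy is the same as the paper's: invoke the balancing of $\widetilde\calR_C$ from Theorem \ref{thm:polyhedral} at the co-dimension $1$ cell $\sigma_0$, split into the non-exposed and exposed cases, and in the exposed case observe that the only contributions besides $\sigma_1,\sigma_2$ come from cells inside the $d$-dimensional cell of $|\self|$, whose transversal directions lie in the cone spanned by the two external-edge directions. This is precisely the paper's argument (the paper phrases the exposed case as a contradiction, restricting the balancing relation to the first four coordinates rather than pairing with a functional, but these are the same computation).

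Two of your intermediate claims are, however, stated incorrectly, even though the argument survives. First, in the exposed case the primitive normals of $\tilde\sigma_1$ and $\tilde\sigma_2$ are \emph{not} antipodal: in the paper's notation they are $(f_1',f_1'',0,\dots,0)$ and $(f_2',f_2'',0,\dots,0)$, and $f_2'\neq -f_1'$ in general (e.g.\ for the square cycle). Your functional argument is rescued not by antipodality but by the balancing of $\Gamma$ at $v'$ and $v''$: the sum of the two normals equals $-(e'+e'')$ where $e'=(-(f_1'+f_2'),0)$ and $e''=(0,-(f_1''+f_2''))$, so any functional $w^*$ vanishing on $e',e''$ automatically satisfies $w^*(n_2)=-w^*(n_1)$, and the conclusion $m_1=m_2$ follows; you should say this, since as written the step ``leaving $m_1-m_2=0$'' rests on the false antipodality claim. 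Second, in the non-exposed case, if the pinned vertex $v$ lies on the cycle, then moving its chip into a cycle edge forces a compensating motion of another chip on the cycle (to preserve linear equivalence), so the primitive normals of the three adjacent cells are not literally the edge directions at $v$ and the relation among them is not literally the balancing of $\Gamma$ at $v$. The compensating components cancel in pairs, so there is still a unique (all-positive) relation among the three normals and your conclusion $m_1=m_2=m_3$ holds, but the justification needs this extra sentence. Finally, both you and the paper pass quickly from ``positive multiplicity on the $\sigma_2$ side near $\sigma_0$'' to ``$\sigma_2\subset\calR_C$''; the clean way to finish is to note that since $\sigma_2$ is a maximal cell of $|\self|$, a $(d-1)$-dimensional subcomplex of $\widetilde\calR_C$ inside $\tilde\sigma_2$ cannot have a boundary facet in the relative interior of $\tilde\sigma_2$ (balancing would fail there), so once it contains a nonempty open subset it contains all of $\tilde\sigma_2$.
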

 \begin{proof}
In order to show that $\calR_C$ contains the adjacent cell $\sigma_2$ as well, we distinguish between the case where $\sigma_0$ is exposed and is adjacent to two cells of $\internal$, or $\sigma_0$ is not exposed, and is adjacent to three cells of $\internal$. We deal with the case of an exposed cell, which is harder, and leave the detail of non-exposed cells to the reader.\footnote{Here is why the non-exposed case is easier: when $\sigma_0$ is not exposed, it is contained in 3 maximal cells $\sigma_1$, $\sigma_2$, and $\sigma_3$. Since $\sigma_1\subset\calR_C$, the balancing condition forces $\sigma_2$ and $\sigma_3$ to be contained in $\calR_C$ as well. When $\sigma_0$ is exposed, a more refined argument is needed.}

Assume then that $\sigma_0$ is an exposed cell, which parametrizes divisors with two chips on vertices of the cycle, and the rest of the chips are in the interior of edges away from the cycle.  The two cells $\sigma_1$ and $\sigma_2$ correspond to divisors where the chips on the vertices move into the interior of cycle. We already know that $\calR_C$ contains $\sigma_1$. As we will see, $\calR_C$ may only be balanced if it contains $\sigma_2$ as well.

We set up some notation in order to describe the pullbacks of these cells to $\RR^{2d}$.
Suppose that $\sigma_0$ classifies divisors of the form $\calD' = v' + v''+ p_3+\ldots + p_{d}$, such that $v'$ and $v''$ are vertices, and $p_3,\ldots,p_d$ are in the interior of edges $e_3,\ldots, e_d$ that are all external to the cycle. For each cell $\sigma$, we denote $\tilde\sigma$ its pullback to $\RR^{2d}$.
Then we may write
\[
\tilde\sigma_0 = \{v'\}\times \{v''\} \times e_3\times\ldots\times e_d.
\]
 Let $f_1',f_2'\in\RR^2$ be the primitive direction vectors of the edges of the cycle emanating from $v'$, and $f_1'',f_2''$ the primitive direction vectors of the edges of the cycle emanating from $v''$. We choose $f_1'$ and $f''_2$ to have clockwise orientation along the cycle, and $f_1''$ and $f_2'$ to have counter-clockwise orientation along the cycle (see Figure \ref{fig:exposed}). If $\sigma_j$ (where $j=1,2$) is the cell in which the chips move from the vertices into $f'_j, f''_j$, then $\tilde\sigma_j$ is the subset of 
$\RR_{\geq 0}f_j'\times \RR_{\geq 0}f''_j\times e_3\times\ldots\times e_d$ in which the coefficients of the first two coordinates are equal.

 \begin{figure}[h]
\centering
\begin{tikzpicture}[scale=.5]

\begin{scope}[shift={(20,0)}]
\draw[blue] (-1,-1) -- (1,-1) -- (1,1) -- (-1,1) -- (-1,-1);
\draw[blue] (-1,-1) to (-2,-2);
\draw[blue] (1,1) to (1.5,1.5);
\draw[blue] (1.5,2.5) -- (1.5,1.5) -- (2.5,1.5);
\draw[blue] (-1,1) to (-2,2);
\draw[blue] (1,-1) to (2,-2);

\draw [fill=purple,radius=.1] (1.2,1.2) circle;
\draw [fill=purple,radius=.1] (1.8,1.5) circle;
\draw [fill=purple,radius=.1] (-1.4,-1.4) circle;

\draw [fill=purple,radius=.1] (-1,1) circle;
\node at (-.6,.6) {\tiny $v'$};
\draw[purple, ->] (-1,1) to (-1,.1);
\node  at (-1.4,0.2) {\tiny $f_2'$};
\draw[purple, ->] (-1,1) to (-.1,1);
\node  at (-0.6,1.4) {\tiny $f_1'$};

\draw [fill=purple,radius=.1] (1,-1) circle;
\node at (.6,-.6) {\tiny$v''$};
\draw[purple, ->] (1,-1) to (1,-.1);
\node  at (1.4,-0.2) {\tiny $f_1''$};
\draw[purple, ->] (1,-1) to (.1,-1);
\node  at (.6,-1.4) {\tiny $f_2''$};

\end{scope}
\end{tikzpicture}
\caption{A divisor in $\sigma_0$.}
\label{fig:exposed}
\end{figure}

We now describe the cell of $\widetilde{|\self|}\subset\RR^{2d}$ adjacent to $\sigma_0$ parametrizing divisors that are supported away from the cycle (note that this cell is $d$-dimensional). Since $\Gamma$ is balanced, the direction vector of the edge emanating from the cycle at $v_j$ is $-f_j'-f_j''$. This cell is therefore of the form
\[
\RR_{\leq 0}(f_1'+f_2')\times \RR_{\leq 0}(f''_1+f_2'') \times e_3\times\ldots\times e_d.
\]

Now, since $\widetilde\calR_C$ is of dimension $d-1$, its intersection with $\tilde\sigma_0$ consists of a finite number of $(d-1)$-dimensional cells $\tilde\tau_1,\tilde\tau_2,\ldots,\tilde\tau_k$.
Assume for the sake of contradiction that $\calR_C$ does not contain $\sigma_2$. We will show that in this case, the balancing condition cannot be satisfied. 
Fix an  integer vector $v_1$ such that $\tilde\sigma_1$ is spanned by $\tilde\sigma_0$ and $v_1$. Similarly, fix integer vectors $u_1,\ldots, u_k$ that span $\tilde\tau_i$. 
Then the balancing condition for $\widetilde\calR_C$ implies that 
\[
v_1+u_1+\ldots+u_k\in \tilde\sigma_0.
\]
Since each $u_i$ lives in $\tilde\tau_i$, its first two coordinates form a negative multiple of $f_1'+f_2'$, and its third and fourth coordinates   form a negative multiple of $f_1''+f_2''$. But $v_1$ lives in $\sigma_1$, so its first two coordinates form a positive multiple of $f_1'$, and the third and fourth coordinates form a positive multiple of   $f_1''$.  These vectors never sum to zero, so the balancing condition cannot be satisfied, which is a contradiction.
 \end{proof}
 
We are finally ready to prove that every internal divisor is realizable. Since we have assumed throughout this paper that the non-Archimedean valuation $\nu:K\to\RR$ is surjective, this proves Theorem \ref{thm:main} in the case where the valuation is surjective.

 \begin{corollary}\label{cor:irreducible}
 If $\Gamma$ has genus $1$, then 
\[
\internal\subseteq\calR_C.
\]
  \end{corollary}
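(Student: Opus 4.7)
The plan is to produce a single top-dimensional cell of $\internal$ that is contained in $\calR_C$, and then spread realizability across all top cells of $\internal$ by combining Lemmas \ref{lem:con1} and \ref{lem:neighbours}.

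To produce the initial cell, I will use the stable-intersection machinery of Section \ref{sec:distinguished-loci}. By Corollary \ref{cor:anyCurve} and Proposition \ref{prop:degreesOfFreedom}, the construction sending $\Gamma'$ to $\Gamma \cdot \Gamma'$, with $\Gamma'$ ranging over perturbations of $\Gamma + \eta$ sharing the subdivided dual polygon of $\Gamma$, yields a $(d-1)$-dimensional family of divisors inside $\calR_C$. For generic small $\eta \in \RR^2$, the $d$ chips of $\Gamma \cdot (\Gamma + \eta)$ lie in the interiors of edges of $\Gamma$, one near each vertex of $\Gamma$; for a cycle vertex $v$, which of the three edges at $v$ contains the chip is determined by which of three open cones in $\RR^2$ contains $\eta$. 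Since two of these cones correspond to cycle edges, I can choose $\eta$ so that the chip near at least one cycle vertex lies in the interior of a cycle edge, forcing $\Gamma \cdot (\Gamma + \eta) \in \internal$. This condition is preserved under small variations of $\eta$ as well as under the edge-length perturbations of Proposition \ref{prop:degreesOfFreedom}, so the $(d-1)$-dimensional realizable family meets the relative interior of some top cell $\sigma^\ast$ of $\internal$ in a $(d-1)$-dimensional piece. Since $\widetilde{\calR_C}$ is a closed polyhedral complex of pure dimension $d-1$ by Theorem \ref{thm:polyhedral}, it must contain all of $\sigma^\ast$.

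For the propagation step, given any other top cell $\sigma$ of $\internal$, Lemma \ref{lem:con1} supplies a chain $\sigma^\ast = \sigma_0, \sigma_1, \ldots, \sigma_k = \sigma$ of top cells in which consecutive ones share a codimension-$1$ face. Applying Lemma \ref{lem:neighbours} inductively along this chain transfers realizability from $\sigma^\ast$ to $\sigma$. Since $\internal$ is pure of dimension $d-1$, every internal divisor lies in some top cell, and we conclude $\internal \subseteq \calR_C$. The main obstacle is the first step: one has to ensure that the $(d-1)$-dimensional realizable locus produced by Proposition \ref{prop:degreesOfFreedom} actually meets a top cell of $\internal$ in full dimension, rather than being trapped in proper faces of the $d$-dimensional cells of $|\self|$ supported entirely on bridge edges. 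The local cone analysis at a single cycle vertex of $\Gamma$ is exactly what handles this.
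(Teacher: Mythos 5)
Your overall strategy is the same as the paper's: use Corollary \ref{cor:anyCurve} and Proposition \ref{prop:degreesOfFreedom} to place a $(d-1)$-dimensional realizable family inside $\internal$, deduce that some top-dimensional cell $\sigma^\ast$ of $\internal$ lies entirely in $\calR_C$, and then propagate along chains of top cells using Lemma \ref{lem:con1} and Lemma \ref{lem:neighbours}. The propagation step is fine, and your cone analysis at a cycle vertex does legitimately show that $\eta$ can be chosen (in fact any generic small $\eta$ works, since the cycles of $\Gamma$ and $\Gamma+\eta$ are boundaries of translates of a convex region and hence already meet in at least two points interior to cycle edges) so that the divisors $\Gamma\cdot\Gamma''$ are internal and remain so under small perturbations.

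The gap is in the inference ``$\widetilde\calR_C$ is a closed polyhedral complex of pure dimension $d-1$, and it contains a $(d-1)$-dimensional piece of the relative interior of $\sigma^\ast$, hence it contains all of $\sigma^\ast$.'' Closedness and pure dimension do not give this: a pure $(d-1)$-dimensional closed complex could perfectly well fill only a subpolytope of $\sigma^\ast$ and stop along a facet lying in the relative interior of $\sigma^\ast$. What rules this out is the \emph{balancing} of $\widetilde\calR_C$ from Theorem \ref{thm:polyhedral}, which you never invoke: if $\calR_C\cap\sigma^\ast$ had a bounding facet in the relative interior of $\tilde\sigma^\ast$, the balancing condition at that facet would force $\widetilde\calR_C$ to have a maximal cell attached there which leaves the affine span of $\tilde\sigma^\ast$. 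But $\calR_C\subseteq|\self|$ and $\sigma^\ast$ is a \emph{maximal} cell of $|\self|$ (of dimension $d-1$), so $|\self|$ has no cell attached to the interior of $\sigma^\ast$ other than $\sigma^\ast$ itself, a contradiction. This is exactly how the paper completes the step, and your argument needs it; with that substitution your proof agrees with the paper's.
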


\begin{proof}
To begin, we show that there is a top-dimensional cell of $\internal$ that is contained in $\calR_C$. Let $\eta\in\RR^2$ be generic with sufficiently small norm and let $\Gamma'=\Gamma+\eta$. Then the proof of Proposition \ref{prop:degreesOfFreedom} tells us $\calR_C$ contains a $(d-1)$-dimensional locus of divisors $\Gamma\cdot\Gamma''$ coming from sufficiently small perturbations of $\eta$ and of the edge lengths of $\Gamma$. Notice that all such $\Gamma\cdot\Gamma''$ are internal. We have therefore produced a $(d-1)$-dimensional open locus of $\calR_C$ contained in $\internal$, and hence there is a $(d-1)$-dimensional cell $\sigma_0$ of $\internal$ that contains an open set of $\calR_C$. We claim that $\calR_C$ must contain $\sigma_0$ itself. Indeed, if $\sigma_0\not\subset\calR_C$, then the balancing condition for $\widetilde\calR_C$ (Theorem \ref{thm:polyhedral}) dictates that there a cell $\sigma'$ of $\calR_C$ that is attached to the interior of $\sigma_0$. But $\calR_C$ is a subset of $|\self|$, and such a cell $\sigma'$ does not exist in $|\self|$, as $\sigma_0$ is a cell of $|\self|$.
%

Having now shown the existence of a top-dimensional cell $\sigma_0$ of $\internal$ with $\sigma_0\subseteq\calR_C$, we fix an interior point $p_0\in\sigma_0$. To prove $\internal\subseteq \calR_C$, it suffices to show that every top-dimensional cell $\sigma$ of $\internal$ is contained in $\calR_C$. Choose an interior point $p$ of $\sigma$. Since $\internal$ is connected in co-dimension 1 by Lemma \ref{lem:con1}, there is a path $\gamma\colon[0,1]\to\internal$ from $p_0$ to $p$ which avoids cells of co-dimension greater than one; choosing the path minimally, we can assume it intersects all cells a finite number of times. Any time that the path crosses from one cell to another, it must pass through their intersection. Since $\gamma$ avoids cells of co-dimension greater than one, this means we have $0=s_0<t_0\leq s_1<t_1\leq s_2<\dots<t_m=1$ and a chain of cells $\sigma_0\supset\tau_0\subset\sigma_1\supset\tau_1\subset\dots\subset\sigma_m=\sigma$ with the $\sigma_i$ full-dimensional, the $\tau_i$ of co-dimension 1, $\gamma([s_i,t_i])\subset\sigma_i$, and $\gamma([t_i,s_{i+1}])\subset\tau_i$; we can assume $\sigma_i\neq\sigma_{i+1}$.

Notice that if $\sigma_j\subseteq\calR_C$, then $\tau_j$ is also contained in $\calR_C$ since $\tau_j\subset\sigma_j$. Then Lemma \ref{lem:neighbours} tells us $\sigma_{i+1}\subseteq\calR_C$. Since $\sigma_0\subseteq\calR_C$, we see by induction that $\sigma\subseteq\calR_C$.
\end{proof}
 
To finish the proof of the main theorem, we remove the assumption that the valuation is surjective. 
Many of the arguments used above are invalid in this case, since the collection of 
$C$-realizable divisors  only becomes a polyhedral complex after passing to the closure. 

\begin{proof}[{Proof of Theorem \ref{thm:main}}]
Let $\calD$ be a $K$-rational internal divisor, and let $(\tilde K,\tilde\nu)$ be an extension of the non-Archimedean field $(K,\nu)$, in which the valuation $\tilde\nu$ surjects onto $\RR$. 

Let $\widetilde{N}$ and $N$ be the linear systems in $\GG_m^2(\tilde{K})$ and $\GG_m^2(K)$ respectively of curves whose Newton polygon is contained in that of $C$, and let $\widetilde{L}$ and $L$ be their restrictions to $C$. By Lemma \ref{lem:anyCurve}, we know that $\trop{\widetilde L}$ contains $\internal$, and in particular contains $\calD$. 

Let $D$ be a divisor on $C$ such that $\trop(D)=\calD$, and let $p_1,\ldots,p_d$ be the corresponding points  in $\GG_m^2({K})$. The collection of curves in $\widetilde{N}$ passing through these points is a vector subspace $\widetilde{V}$. Since $p_1,\ldots, p_d$ are $K$-points, $\widetilde{V}$ is defined via linear equations over $K$. The same linear equations cut out a vector subspace of $N$ of the same positive dimension. It follows that there is a curve $C'$ defined over $K$ passing through $p_1,\ldots,p_d$, and in particular, $C\cap C'$ is a divisor tropicalizing to $\calD$. 
\end{proof}

\section{Non-internal cells and generalizations}
 The techniques in the proof of Lemma \ref{lem:neighbours} may also be used to provide partial information about the realizable locus inside the non-internal cells $|\self|\setminus\internal$. In fact, the balancing condition of $\calR_C$ implies that there are exactly two possibilities for the realizable cells  adjacent to an exposed cell that are not in $\internal$.
 \begin{enumerate}
 \item Either there is a unique such cell, parametrizing divisors where the two chips on $v',v''$ move away from the cycle at equal speed, or
 \item There are two such cells, and in each of them one chip moves away from the cycle, and the other chip stays at the vertex.
 \end{enumerate}
 Note, however, that, since these cells are in a $d$-dimensional cell of $|\self|$, there is no guarantee that $\calR_C$ contains them  in their entirety.

\begin{example}
Let $\Gamma$ be the tropical curve shown in Figure \ref{fig:non_internal} with four vertices $v_1=(-1,1), v_2=(1,1), v_3=(1,-1), v_4=(-1,-1)$. Denote the infinite edge adjacent to $v_i$ by $e_i$ for $i=1,2,3,4$.  Let $C$ be any curve such that $\trop(C)=\Gamma$.  
Consider the set of divisors of the form $q_1+q_2 + p_3 + p_4$, where  $p_3, p_4$ are on $e_3,e_4$ respectively, and $q_1$ and $q_2$ are both in the cycle. By Theorem \ref{thm:main}, such a divisor is realizable if and only if $D$ is linearly equivalent to $\self$. 

Let $\sigma_0$ be cell parametrizing divisors obtained by forcing $q_1$ and $q_2$ to remain at $v_1,v_2$. In order for $\calR_C$ to be balanced at $\sigma_0$, there must be cells adjacent to it parametrizing divisors where the chips from $v_1,v_2$ move along $e_1,e_2$. Moreover, arbitrarily close to $\sigma_0$, precisely one of the following two options may occur.
 \begin{enumerate}
 \item Either there is a unique such cell, parametrizing divisors where the two chips on $v_1,v_2$ vertices move away from the cycle at equal speed, or
 \item There are two such cells, and in each of them one chip moves away from the cycle, and the other chip stays at the vertex.
 \end{enumerate}
We claim, moreover, that the only realizable option is the first one. Indeed, given a divisor close enough to $C_0$, we will realize it as the stable intersection with  a curve $\Gamma'$ with the same dual polygon as $\Gamma$. Lemma \ref{lem:anyCurve} then implies that this divisor is realizable. 

Indeed, let $D = p_1+p_2+p_3+p_4$ be a divisor such that $p_1$ and $p_2$ are at distance $\epsilon$ from $v_1,v_2$ on $e_1,e_2$, and $p_3, p_4$ are further at distances $\delta_3,\delta_4>\epsilon$ on $e_3,e_4$. 
Let $\Gamma'$ be the tropical curve whose vertices are at $(-1-\delta_4, 1+\epsilon), (1+\delta_3, 1+\epsilon), (-1-\delta_4, -R), (1+\delta_3, -R)$, where $R$ is any real number greater than $1+\max(\delta_3,\delta_4)$. Then $\Gamma\cap_{\text{st}}\Gamma' = D$. See Figure \ref{fig:non_internal}.

In particular, divisors where only one of $p_1$ or $p_2$ is away from the vertex are not realizable.  
\end{example}

Note that the result of the example did not depend on the choice of $C$. We have therefore  shown the following.

\begin{theorem}\label{thm:counter}
There is a tropical plane curve $\Gamma$ and a divisor $D\in|\self|$ such that $D$ is not the tropicalization of $C\cap C'$ for any pair of curves with $\trop(C)=\trop(C')=\Gamma$.

In particular, this is a counter example to \cite[Conjecture 3.4]{Mor15}, even in the case of self intersection (see \cite[Lemma 3.15]{BM} for a non self intersection example).
\end{theorem}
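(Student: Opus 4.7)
The plan is to extract the counterexample directly from the analysis in the example just above. Take $\Gamma$ to be the smooth genus $1$ tropical plane curve with cycle vertices $v_1,\ldots,v_4$ and outward rays $e_1,\ldots,e_4$ from Figure \ref{fig:non_internal}, so $d=4$ and $g=1$. Fix small $\epsilon>0$ and $\delta_3,\delta_4>\epsilon$, and put
$$D \;=\; v_1 \;+\; p_2 \;+\; p_3 \;+\; p_4,$$
where $p_2\in e_2$ lies at distance $\epsilon$ from $v_2$ and $p_i\in e_i$ at distance $\delta_i$ from $v_i$ for $i=3,4$. First I would verify $D\in|\self|$: the symmetric divisor $D^{\mathrm{sym}}=p_1+p_2+p_3+p_4$ with $p_1\in e_1$ at distance $\epsilon$ from $v_1$ is precisely the stable intersection $\Gamma\cdot\Gamma'$ constructed in the example, so $D^{\mathrm{sym}}\in|\self|$; and $D-D^{\mathrm{sym}}=v_1-p_1$ is the divisor of the piecewise linear function equal to $\min(s,\epsilon)$ along $e_1$ (parametrized by arclength $s$ from $v_1$) and zero elsewhere, so $D\sim D^{\mathrm{sym}}\sim\self$.

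To show $D\notin\calR_C$ for every curve $C$ with $\trop(C)=\Gamma$, I would fix an interior point of the exposed cell $\sigma_0\subset|\self|$ parametrizing divisors of the form $v_1+v_2+q_3+q_4$. Theorem \ref{thm:main} gives $\sigma_0\subseteq\calR_C$, and Theorem \ref{thm:polyhedral} says $\widetilde\calR_C$ is balanced and pure of dimension $3$. The dichotomy recorded at the start of this section then applies: among the $3$-cells of $|\self|\setminus\internal$ adjacent to $\sigma_0$, the balancing condition permits $\calR_C$ to contain either \textbf{(i)} a single cell where both chips slide outward along $e_1,e_2$ at equal speed, or \textbf{(ii)} a pair of cells in each of which one chip slides out while the other remains at its vertex---but not both patterns simultaneously. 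The divisor $D$ lies in the interior of one of the option (ii) cells.

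Finally, I would observe that option (i) is already realized: the tropical curve $\Gamma'$ exhibited in the example has the same dual polygon as $\Gamma$ and its stable intersection with $\Gamma$ equals $D^{\mathrm{sym}}$, which lives in a cell of type (i); Corollary \ref{cor:anyCurve} then places that cell inside $\calR_C$, so the first branch of the dichotomy occurs. Consequently no cell of type (ii) meets $\calR_C$, whence $D\notin\calR_C$. Since neither the construction of $\Gamma'$ nor the balancing analysis depends on $C$, the conclusion holds uniformly in $C$, yielding the desired counterexample to \cite[Conjecture 3.4]{Mor15}. The main obstacle is establishing the dichotomy rigorously; the required argument refines the balancing computation from the proof of Lemma \ref{lem:neighbours}, using that the multiplicities contributed by the internal cells $\sigma_1,\sigma_2$ to the balancing equation at $\tilde\sigma_0$ constrain the external contributions to lie in exactly one of the two patterns.
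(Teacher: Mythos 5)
Your proposal is correct and follows essentially the same route as the paper: the same curve $\Gamma$, the same type of divisor (one chip at a vertex, the other slid out along the adjacent ray near the exposed cell $\sigma_0$), the same balancing dichotomy at $\sigma_0$ resolved by explicitly realizing the equal-speed cell via the stable intersection with the curve $\Gamma'$ of the same dual polygon, and the same observation that the argument is independent of $C$. Your added check that $D\in|\self|$ (via $D^{\mathrm{sym}}=\Gamma\cdot\Gamma'$ and a piecewise linear function supported on $e_1$) is a harmless supplement consistent with the paper's conventions, and your caveat about making the dichotomy rigorous mirrors exactly the point the paper itself leaves at the level of the balancing computation of Lemma \ref{lem:neighbours}.
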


 \begin{figure}[h]
\centering
\begin{tikzpicture}[scale=.5]

\begin{scope}[shift={(0,0)}]

\node [red, left] at (3.3,.5) {\small $\Gamma'$};
\draw[red] (-2.4,1.2) -- (2.2,1.2) -- (2.2,-2.7) -- (-2.4,-2.7) -- (-2.4,1.2);
\draw[red] (-2.4,1.2) to (-2.8,1.6);
\draw[red] (2.2,1.2) to (2.6,1.6);
\draw[red] (2.2,-2.7) to (2.6,-3.1);
\draw[red] (-2.4,-2.7) to (-2.8,-3.1);

\node [blue, left] at (2.2,2.4) {\small $\Gamma$};
\draw[blue] (-1,-1) -- (1,-1) -- (1,1) -- (-1,1) -- (-1,-1);
\draw[blue] (-1,-1) to (-3,-3);
\draw[blue] (1,1) to (2.5,2.5);
\draw[blue] (-1,1) to (-2.5,2.5);
\draw[blue] (1,-1) to (3,-3);

\draw [fill=purple,radius=.1] (1.2,1.2) circle;
\node [above] at (1.2,1.2) {\small $p_2$};

\draw [fill=purple,radius=.1] (-1.2,1.2) circle;
\node [above] at (-1.2,1.2) {\small $p_1$};

\draw [fill=purple,radius=.1] (-2.4,-2.4) circle;
\node [left] at (-2.4,-2.4) {\small $p_4$};

\draw [fill=purple,radius=.1] (2.2,-2.2) circle;
\node [right] at (2.2,-2.2) {\small $p_3$};

\end{scope}
\end{tikzpicture}
\caption{Realizing a non-internal divisor via intersection.}
\label{fig:non_internal}
\end{figure}

We end the paper with a natural generalization of our results to higher genus.
Let $\Gamma$ be a tropical curve of genus $g$, and let $\sigma$ be a cell of  $|\self|$ parametrizing divisors with $k$ chips on vertices. If $\dim\sigma = d-g-k$ then the divisors in $\sigma$ are said to be \emph{internal}. 
In genus $1$, this definition coincides with our original definition. 

\begin{conjecture}
Let $\Gamma$ be a smooth tropicalization of an algebraic curve $C$. Then every internal divisor on $\Gamma$ is $C$-realizable.
\end{conjecture}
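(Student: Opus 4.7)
The plan is to mirror the three-step structure of the genus $1$ proof of Theorem \ref{thm:main}: (i) seed a top-dimensional internal cell with realizable divisors via stable intersections, (ii) propagate realizability to adjacent top-dimensional internal cells using the balancing condition of Theorem \ref{thm:polyhedral}, and (iii) conclude using connectivity in codimension $1$ of the internal locus. Throughout, one may reduce (as in the proof of Theorem \ref{thm:main} at the end of Section \ref{sec:genus1-case}) to the case where the valuation of the ground field surjects onto $\RR$, so that $\widetilde\calR_C$ is genuinely the balanced polyhedral complex provided by Theorem \ref{thm:polyhedral}.

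For step (i), I would observe that Proposition \ref{prop:degreesOfFreedom} is already stated in arbitrary genus. Taking $\eta\in\RR^2$ generic and sufficiently small and $\Gamma'=\Gamma+\eta$, the divisors $\Gamma\cdot\Gamma''$ produced from small perturbations of $\Gamma'$ together with variations of $\eta$ carve out a $(d-g)$-dimensional open sub-locus of $\calR_C$. By construction, every such divisor places all $d$ chips in the interiors of edges of $\Gamma$, so it lies in the open cell $\sigma_\varnothing$ of $|\self|$ with $k=0$ chips on vertices; this cell is internal since it has the maximal possible dimension $d-g$. Exactly as in the first paragraph of Corollary \ref{cor:irreducible}, the balancing condition from Theorem \ref{thm:polyhedral} upgrades this $(d-g)$-dimensional open set of realizable divisors to the statement $\sigma_\varnothing\subseteq\calR_C$.

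For step (ii), I would establish an analog of Lemma \ref{lem:neighbours}: if $\sigma_1$ and $\sigma_2$ are top-dimensional internal cells meeting in a codimension-$1$ cell $\sigma_0$, and $\sigma_1\subseteq\calR_C$, then $\sigma_2\subseteq\calR_C$. The argument should again proceed by setting up the pullback $\widetilde{\sigma_0}\subset\RR^{2d}$, classifying the cells of $|\self|$ adjacent to $\sigma_0$ (these come from moving vertex chips along one of the three edges at each vertex, or pushing chips off the cycles along non-cycle edges), and then writing down the balancing equation for $\widetilde\calR_C$ at a generic point of $\widetilde{\sigma_0}$. Using the trivalent/balancing structure of $\Gamma$, the primitive vectors coming from non-internal neighbours of $\sigma_0$ should span a proper half-space complementary to the direction of $\sigma_2$, so that failure of $\sigma_2\subseteq\calR_C$ contradicts balancing. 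For step (iii), once Lemma \ref{lem:con1} is generalized to a codimension-$1$ connectivity statement for the full internal locus, an induction on the number of edges crossed by an admissible path--identical in form to the last paragraph of the proof of Corollary \ref{cor:irreducible}--finishes the argument.

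The main obstacle is the analog of Lemma \ref{lem:neighbours}, i.e.~controlling the local picture of $|\self|$ around a codimension-$1$ internal cell. In genus $1$ the bulk of the work was the exposed case, where $\sigma_0$ has two chips simultaneously at vertices; in higher genus there is a richer combinatorial zoo of ``exposed'' configurations corresponding to chips placed at vertices of different cycles, bridges between cycles, and common boundary vertices of two cycles. A systematic classification of these configurations, together with a uniform balancing argument showing that the only balanced extension of $\sigma_1$ past $\sigma_0$ lies inside $\internal$, will be the technical heart of the proof. A secondary but nontrivial obstacle is the codimension-$1$ connectivity of the higher-genus internal locus: the chip-firing path used in Lemma \ref{lem:con1} must be refined so that the generic perturbations keep at most one chip at a vertex at a time across all $g$ cycles, which one expects to handle by iterating the genus-$1$ argument cycle by cycle while tracking linear equivalence via a basis of cycles of $\Gamma$.
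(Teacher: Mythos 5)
The statement you are addressing is a \emph{conjecture}: the paper offers no proof of it, and your proposal does not close the gap either --- it is a programme that reproduces the paper's genus $\leq 1$ strategy and defers exactly the steps that make the higher-genus case open. The most serious issue is step (ii). You acknowledge that the analog of Lemma \ref{lem:neighbours} is ``the technical heart,'' but the assertion that ``the primitive vectors coming from non-internal neighbours of $\sigma_0$ should span a proper half-space complementary to the direction of $\sigma_2$'' is unsupported, and the genus-$1$ argument does not transfer formally. That argument leans on very special features: there is a single cycle, a codimension-$1$ internal cell has at most the one ``exposed'' degeneration (two chips at cycle vertices whose positions determine one another), and the only adjacent non-internal cell is the $d$-dimensional one whose directions are explicitly $\RR_{\leq 0}(f_1'+f_2')\times\RR_{\leq 0}(f_1''+f_2'')\times\cdots$. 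For $g\geq 2$ a codimension-$1$ internal cell can abut cells of every intermediate dimension $d-g+1,\dots,d$ (chips at vertices shared by two cycles, on bridges between cycles, etc.), and inside those higher-dimensional cells $\calR_C$ need not be a union of cells of $|\self|$ (the paper's Section 4 and Theorem \ref{thm:counter} show realizable loci inside $d$-dimensional cells can be proper, ``bent'' pieces). So the balancing equation at $\sigma_0$ involves direction vectors that are not drawn from a finite, explicitly known list, and a uniform argument ruling out all balanced extensions other than $\sigma_2$ is precisely what is missing. Step (iii) has the same status: pure-dimensionality and codimension-$1$ connectivity of the higher-genus internal locus are asserted, not proved, and the chip-moving paths of Lemma \ref{lem:con1} must now respect linear equivalence on a genus-$g$ graph, where the cycles are coupled through bridges and shared vertices.

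There is also a concrete error in step (i). You claim the seed divisors from Proposition \ref{prop:degreesOfFreedom} lie in an internal cell ``since it has the maximal possible dimension $d-g$.'' This inverts the dimension count: cells of $|\self|$ with no chips at vertices have dimensions ranging from $d-g$ up to $d$ (e.g.\ chips supported on bridge edges give $d$-dimensional cells), so $d-g$ is the \emph{minimal} dimension among such cells, and ``all chips in edge interiors'' does not by itself place a divisor in an internal cell. This matters because the upgrade from ``a $(d-g)$-dimensional open subset of $\calR_C$ inside a cell'' to ``the whole cell is realizable'' (the first paragraph of Corollary \ref{cor:irreducible}) uses that the ambient cell itself has dimension $d-g$; if the stable-intersection divisors landed in a higher-dimensional cell, balancing would permit $\calR_C$ to continue inside that cell without filling it. You would need to prove that, for generic small $\eta$, the divisor $\Gamma\cdot(\Gamma+\eta)$ places chips so that every cycle of $\Gamma$ imposes its closing condition on the containing cell, forcing its dimension down to exactly $d-g$. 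That is plausible but is an additional lemma, not a consequence of anything stated. In short: the proposal is a reasonable outline of how one might attack the conjecture, but steps (i)--(iii) each contain unproved claims, and the balancing analysis at higher-genus ``exposed'' configurations --- the reason the statement is a conjecture rather than a theorem --- remains entirely open.
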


	\bibliographystyle{alpha}
	\bibliography{bibfile}	

\end{document}